\newtheorem{theorem}{Theorem}
\newtheorem{definition}[theorem]{Definition}
\newtheorem{example}[theorem]{Example}
\newtheorem{remark}[theorem]{Remark}
\newcommand{\Hl}{{_{a}}\textsl{J}_t^\alpha}
\newcommand{\Hr}{{_{t}}\textsl{J}_b^\alpha}
\begin{document}

\title{A Generalized Fractional Calculus of Variations\thanks{This is a preprint 
of a paper whose final and definitive form will appear
in \emph{Control and Cybernetics}. Paper submitted 01-Oct-2012;
revised 25-March-2013; accepted for publication 17-April-2013.}}

\author{Tatiana Odzijewicz$^{1}$\\
\texttt{tatianao@ua.pt}
\and
Agnieszka B. Malinowska$^{2}$\\
\texttt{a.malinowska@pb.edu.pl}
\and
Delfim F. M. Torres$^{1}$\\
\texttt{delfim@ua.pt}}

\date{$^1$Center for Research and Development in Mathematics and Applications\\
Department of Mathematics, University of Aveiro, 3810-193 Aveiro, Portugal\\[0.3cm]
$^2$Faculty of Computer Science, Bialystok University of Technology\\
15-351 Bia\l ystok, Poland}

\maketitle


\begin{abstract}
We study incommensurate fractional variational problems
in terms of a generalized fractional integral with
Lagrangians depending on classical derivatives and
generalized fractional integrals and derivatives.
We obtain necessary optimality conditions for the basic
and isoperimetric problems, transversality
conditions for free boundary value problems,
and a generalized Noether type theorem.

\bigskip

\noindent \textbf{Keywords:}
generalized fractional operators;
fractional variational analysis;
Euler--Lagrange equations;
natural boundary conditions;
Noether's theorem;
damped harmonic oscillator.

\bigskip

\noindent \textbf{2010 Mathematics Subject Classification:}
49K05; 49K21; 26A33; 34A08.

\end{abstract}


\section{Introduction}

Till recently, it was believed that Lagrangian and Hamiltonian mechanics
were not valid in the presence of nonconservative forces such as friction \cite{book:Lanezos}.
In the last years, however, several approaches have been investigated in order to find a Lagrangian
or a Hamiltonian description for classes of dissipative (or dissipative-looking) systems
\cite{MR2609969,MR2642398,K:R:S,MR2433429,MR2531292}.
One possibility to have a Lagrangian and a Hamiltonian
formulation, for both conservative and nonconservative systems,
was proposed by Fred Riewe in 1996 and consists
in using fractional derivatives \cite{CD:Riewe:1996,CD:Riewe:1997}.
Riewe's papers \cite{CD:Riewe:1996,CD:Riewe:1997} gave rise to a new and important
research field, called \emph{the fractional calculus of variations} \cite{book:frac}.
Nowadays the subject is of strong interest, and many results of variational analysis
were extended to the non-integer case (see, e.g.,
\cite{MyID:182,MyID:209,DerInt,MyID:152,MyID:179,Cresson,fred:tor,MyID:181,MyID:207,MyID:203}).
Here we study problems of calculus of variations with generalized fractional operators
\cite{OmPrakashAgrawal,FVC_Gen_Int,FVC_Gen}. Generalized fractional integrals are given as
a linear combination of left and right fractional integrals with general kernels.
Generalized fractional Riemann--Liouville and Caputo derivatives are defined as a composition
of classical derivatives and generalized fractional integrals. In a first problem,
we ask how to determine the extremizers of a functional defined by a generalized fractional integral
involving $n$ generalized fractional Caputo derivatives and $n$ generalized fractional integrals.
All these operators have different (non-integer) orders. We obtain necessary optimality conditions,
and in the case of free boundary values, also natural boundary conditions. Next, we derive
Euler--Lagrange type equations for an extended isoperimetric problem
and we obtain a Noether type theorem.

The text is organized as follows. In Section~\ref{sec:prelim} we give the definitions
and main properties of the generalized fractional operators. We prove Euler--Lagrange equations
for the fundamental generalized problem in Section~\ref{sec:fp}, and natural boundary conditions
for free boundary value problems in Section~\ref{sec:fpfb}. Section~\ref{sec:fp:iso}
is devoted to the generalized isoperimetric problem and Section~\ref{sec:NT} to Noether's theorem.
Finally, in Section~\ref{sec:appl:phys} we present an application of our results
to the damped harmonic oscillator.


\section{Preliminaries}
\label{sec:prelim}

We start by defining the generalized fractional operators \cite{OmPrakashAgrawal}.
As particular cases, by choosing appropriate kernels, such operators are reduced
to the standard fractional integrals and derivatives of fractional calculus
(see, e.g., \cite{book:Kilbas,book:Klimek,book:Podlubny}).
Throughout the text, $\alpha$ denotes a real number between zero and one.
Following \cite{MyID:209}, we use round brackets for the arguments of functions,
and square brackets for the arguments of operators.

\begin{definition}[The generalized fractional integral]
The operator $K_P^\alpha$ is given by
\begin{equation*}
K_P^{\alpha}\left[f\right](x)
:= K_P^{\alpha}\left[t \mapsto f(t)\right](x)
=p\int\limits_{a}^{x}k_{\alpha}(x,t)f(t)dt
+q\int\limits_{x}^{b}k_{\alpha}(t,x)f(t)dt,
\end{equation*}
where $P=\langle a,x,b,p,q\rangle$ is the \emph{parameter set} ($p$-set for brevity),
$x\in[a,b]$, $p,q$ are real numbers, and $k_{\alpha}(x,t)$
is a kernel which may depend on $\alpha$.
The operator $K_P^\alpha$ is referred as the \emph{operator $K$} ($K$-op for simplicity)
of order $\alpha$ and $p$-set $P$.
\end{definition}

Note that if we define
\[
G(x,t):= \left\{ \begin{array}{ll}
p k_\alpha(x,t) & \mbox{if $t < x$},\\
q k_\alpha(t,x) & \mbox{if $t \geq x$},
\end{array} \right.
\]
then the operator $K_P^\alpha$ can be written in the form
\begin{equation*}
K_P^{\alpha}\left[f\right](x)
= K_P^{\alpha}\left[ t \mapsto f(t)\right](x)
=\int_a^b G(x,t) f(t) dt.
\end{equation*}
Thus, the generalized fractional integral is a Fredholm operator,
one of the oldest and most respectable class of operators
that arise in the theory of integral equations
\cite{book:Helemskii,book:Polyanin}.

\begin{example}
\begin{enumerate}
\item Let $k_\alpha(t-\tau)
=\frac{1}{\Gamma(\alpha)}(t-\tau)^{\alpha-1}$ and $0<\alpha<1$.
If $P=\langle a,t,b,1,0\rangle$, then
\begin{equation*}
K_{P}^\alpha[f](t)=\frac{1}{\Gamma(\alpha)}
\int\limits_a^t(t-\tau)^{\alpha-1}f(\tau)d\tau
=: {_{a}}\textsl{I}^{\alpha}_{t}[f](t)
\end{equation*}
is the left Riemann--Liouville fractional integral
of order $\alpha$; if $P=\langle a,t,b,1,0\rangle$, then
\begin{equation*}
K_{P}^\alpha[f](t)=\frac{1}{\Gamma(\alpha)}
\int\limits_t^b(\tau-t)^{\alpha-1}f(\tau)d\tau
=: {_{t}}\textsl{I}^{\alpha}_{b} [f](t)
\end{equation*}
is the right Riemann--Liouville fractional integral
of order $\alpha$.

\item For $k_\alpha(t-\tau)
=\frac{1}{\Gamma(\alpha(t,\tau))}(t-\tau)^{\alpha(t,\tau)-1}$
and $P=\langle a,t,b,1,0\rangle$
\begin{equation*}
K_{P}^\alpha[f](t)
= \int\limits_a^t\frac{1}{\Gamma(\alpha(t,
\tau)}(t-\tau)^{\alpha(t,\tau)-1}f(\tau)d\tau
=: {_{a}}\textsl{I}^{\alpha(t,\cdot)}_{t} [f](t)
\end{equation*}
is the left Riemann--Liouville fractional integral
of variable order $\alpha(t,\tau)$,
and for $P=\langle a,t,b,0,1\rangle$
\begin{equation*}
K_{P}^\alpha[f](t)
= \int\limits_t^b\frac{1}{\Gamma(\alpha(\tau,t))}(\tau
-t)^{\alpha(t,\tau)-1}f(\tau)d\tau
=: {_{t}}\textsl{I}^{\alpha(\cdot,t)}_{b} [f](t)
\end{equation*}
is the right Riemann--Liouville fractional integral
of variable order $\alpha(t,\tau)$ \cite{IDOTA2011}.

\item For $0<\alpha<1$, $k_\alpha(t,\tau)
=\frac{1}{\Gamma(\alpha)}\left(\log\frac{t}{\tau}\right)^{\alpha-1}\frac{1}{\tau}$
and $P=\langle a,t,b,1,0\rangle$, the operator
$K_{P}^\alpha$ reduces to the left Hadamard fractional integral \cite{MR2886720},
\begin{equation*}
K_{P}^\alpha[f](t)
= \frac{1}{\Gamma(\alpha)}\int_a^t
\left(\log\frac{t}{\tau}\right)^{\alpha-1}\frac{f(\tau)d\tau}{\tau}
=: \Hl [f](t),
\end{equation*}
and for $P=\langle a,t,b,0,1\rangle$ operator $K_{P}$
reduces to the right Hadamard fractional integral,
\begin{equation*}
K_{P}^\alpha[f](t)
= \frac{1}{\Gamma(\alpha)}\int_t^b \left(
\log\frac{\tau}{t}\right)^{\alpha-1}\frac{f(\tau)d\tau}{\tau}
=:\Hr [f](t).
\end{equation*}

\item Generalized fractional integrals can be also reduced to,
e.g., Riesz, Katugampola or Kilbas fractional operators.
Their definitions can be found in \cite{Katugampola,Kilbas,book:Kilbas}.
\end{enumerate}
\end{example}

Next results yield boundedness of the generalized fractional integral.

\begin{theorem}[\textrm{cf.} Example~6 of \cite{book:Helemskii}]
Let $\alpha\in(0,1)$ and $P=\langle a,x,b,p,q\rangle$.
If $k_\alpha$ is a square integrable function
on the square $\Delta=[a,b]\times[a,b]$, then
$K_P^{\alpha}:L_2\left([a,b]\right)\rightarrow L_2\left([a,b]\right)$
is well defined, linear, and bounded operator.
\end{theorem}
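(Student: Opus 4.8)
The plan is to read this as the classical Hilbert--Schmidt statement: an integral operator whose kernel is square integrable over the product domain is automatically a well-defined bounded operator on $L_2$, and this is exactly the content of the cited Example~6 of \cite{book:Helemskii}. Linearity is immediate from linearity of the integral, so the substantive work is (i) to check that the Fredholm kernel $G$ exhibited just before the statement lies in $L_2(\Delta)$, and (ii) to run the standard Cauchy--Schwarz estimate, which simultaneously yields well-definedness (the image is again in $L_2$) and the norm bound.

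First I would verify that $G\in L_2(\Delta)$. Splitting $\Delta$ along its diagonal and inserting the piecewise definition of $G$ gives
\[
\iint_\Delta |G(x,t)|^2\,dt\,dx
= p^2\iint_{\{t<x\}} |k_\alpha(x,t)|^2\,dt\,dx
+ q^2\iint_{\{t\ge x\}} |k_\alpha(t,x)|^2\,dt\,dx .
\]
The first region is a subset of $\Delta$, so its term is at most $p^2\|k_\alpha\|_{L_2(\Delta)}^2$. For the second term the integrand is $k_\alpha$ evaluated at the reflected point $(t,x)$; relabelling $(\xi,\eta)=(t,x)$ (a measure-preserving swap with unit Jacobian) carries $\{t\ge x\}$ to $\{\xi\ge\eta\}\subset\Delta$ and the integrand to $|k_\alpha(\xi,\eta)|^2$, so this term is at most $q^2\|k_\alpha\|_{L_2(\Delta)}^2$. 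Hence $\|G\|_{L_2(\Delta)}^2 \le (p^2+q^2)\,\|k_\alpha\|_{L_2(\Delta)}^2 < \infty$, with measurability of $G$ inherited from that of $k_\alpha$ through the piecewise definition.

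Next, for $f\in L_2([a,b])$ I would apply Cauchy--Schwarz pointwise in $x$,
\[
\left| K_P^\alpha[f](x) \right|^2
= \left| \int_a^b G(x,t) f(t)\,dt \right|^2
\le \left( \int_a^b |G(x,t)|^2\,dt \right) \|f\|_{L_2}^2 ,
\]
which shows the defining integral converges for almost every $x$, and then integrate in $x$ and use Step~1 to obtain
\[
\|K_P^\alpha[f]\|_{L_2}^2 \le \|G\|_{L_2(\Delta)}^2\,\|f\|_{L_2}^2
\le (p^2+q^2)\,\|k_\alpha\|_{L_2(\Delta)}^2\,\|f\|_{L_2}^2 .
\]
Thus $K_P^\alpha$ maps $L_2([a,b])$ into itself and is bounded, with $\|K_P^\alpha\|\le\sqrt{p^2+q^2}\,\|k_\alpha\|_{L_2(\Delta)}$.

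The only genuinely delicate point is the reflected-kernel contribution in Step~1: one must recognise that $|k_\alpha(t,x)|^2$ over the upper triangle is, after the variable swap, just a sub-integral of $\|k_\alpha\|_{L_2(\Delta)}^2$, so that the asymmetry in $G$ does not spoil square integrability. Everything after that is the routine Hilbert--Schmidt argument, and I expect no further obstacle.
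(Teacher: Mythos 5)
Your proof is correct and follows exactly the route the paper intends: the paper gives no argument of its own but simply cites the classical Hilbert--Schmidt example (Example~6 of the Helemskii reference), and your write-up is precisely that argument, with the one paper-specific ingredient---checking that the Fredholm kernel $G$ built from $k_\alpha$ lies in $L_2(\Delta)$ via the reflection of the upper triangle---handled correctly.
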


\begin{theorem}[\textrm{cf.} \cite{FVC_Gen_Int,FVC_Gen}]
\label{theorem:L1}
Let $k_\alpha\in L_1\left([0,b-a]\right)$ be a difference kernel, that is,
$k_\alpha(x,t)=k_\alpha(x-t)$.
Then, $K_P^{\alpha}:L_1\left([a,b]\right)\rightarrow L_1\left([a,b]\right)$
is a well defined bounded and linear operator.
\end{theorem}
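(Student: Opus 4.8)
The plan is to exploit the difference-kernel structure, which turns each of the two terms in $K_P^\alpha[f]$ into a truncated convolution of $k_\alpha$ with $f$; boundedness in $L_1$ is then a finite-interval version of Young's convolution inequality. Linearity of $K_P^\alpha$ is immediate from the definition, since $f\mapsto K_P^\alpha[f]$ is assembled from integration and pointwise scaling by $p$ and $q$, both linear operations. Thus the substance of the argument is to establish that $K_P^\alpha[f]\in L_1([a,b])$ together with the norm estimate, from which well-definedness for almost every $x$ follows as a by-product.

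Writing the difference kernel explicitly,
\begin{equation*}
K_P^{\alpha}[f](x)
= p\int_a^x k_\alpha(x-t)f(t)\,dt
+ q\int_x^b k_\alpha(t-x)f(t)\,dt
=: I_1(x)+I_2(x),
\end{equation*}
I would treat $I_1$ and $I_2$ separately. For $I_1$, consider the nonnegative measurable function $(x,t)\mapsto|k_\alpha(x-t)||f(t)|$ on the triangle $\{a\le t\le x\le b\}$ and apply Tonelli's theorem to interchange the order of integration. Integrating in $x$ first for fixed $t$ and substituting $u=x-t$ reduces the inner integral to $\int_0^{b-t}|k_\alpha(u)|\,du\le\|k_\alpha\|_{L_1([0,b-a])}$, because $b-t\le b-a$. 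Hence
\begin{equation*}
\int_a^b|I_1(x)|\,dx
\le |p|\,\|k_\alpha\|_{L_1([0,b-a])}\,\|f\|_{L_1([a,b])}.
\end{equation*}
The finiteness of this double integral is precisely what legitimizes Fubini's theorem and guarantees that the inner integral defining $I_1(x)$ converges for almost every $x$.

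The term $I_2$ is handled symmetrically: on the triangle $\{a\le x\le t\le b\}$, integrating in $x$ first and substituting $v=t-x$ reduces the inner integral to $\int_0^{t-a}|k_\alpha(v)|\,dv\le\|k_\alpha\|_{L_1([0,b-a])}$, so that $\int_a^b|I_2(x)|\,dx\le|q|\,\|k_\alpha\|_{L_1}\,\|f\|_{L_1}$. Adding the two estimates yields
\begin{equation*}
\|K_P^\alpha[f]\|_{L_1([a,b])}
\le (|p|+|q|)\,\|k_\alpha\|_{L_1([0,b-a])}\,\|f\|_{L_1([a,b])},
\end{equation*}
which is the claimed boundedness. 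The only genuinely delicate point—everything else being the elementary change of variables that collapses the inner integral to the full $L_1$ norm of the kernel—is the order of the measure-theoretic bookkeeping: one must first apply Tonelli to the absolute values to secure finiteness of the iterated integral, and only then invoke Fubini to interchange the order and to conclude simultaneously that $K_P^\alpha[f]$ is defined a.e., is measurable, and belongs to $L_1$.
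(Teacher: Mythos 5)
Your proof is correct, and it follows essentially the same route as the proof the paper relies on (the paper itself gives no inline argument, deferring to the cited references, where the result is established by exactly this Tonelli--Fubini argument on the two triangles, reducing each term to a truncated convolution bounded via $\|k_\alpha\|_{L_1([0,b-a])}\|f\|_{L_1([a,b])}$). The decomposition, the change of variables, and the resulting bound $(|p|+|q|)\,\|k_\alpha\|_{L_1}\,\|f\|_{L_1}$ all match the standard treatment, so nothing further is needed.
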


\begin{theorem}[\textrm{cf.} Theorem~2.4 of \cite{FVC_Gen}]
\label{theorem:exist}
Let $P=\langle a,x,b,p,q\rangle$. If $k_{1-\alpha}$ is a difference kernel,
$k_{1-\alpha}\in L_1\left([0,b-a]\right)$ and $f\in AC\left([a,b]\right)$,
then $K_P^{1-\alpha}[f]$ belongs to $AC\left([a,b]\right)$.
\end{theorem}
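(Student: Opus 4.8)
The plan is to use linearity to reduce to the two Fredholm pieces: since
$K_P^{1-\alpha}[f](x) = p\int_a^x k(x-t)f(t)\,dt + q\int_x^b k(t-x)f(t)\,dt$
with $k := k_{1-\alpha}\in L_1([0,b-a])$ a difference kernel, it suffices to prove that each of the two integrals defines an absolutely continuous function of $x$; the linear combination then inherits the property. I would also record at the outset the indefinite integral $\Phi(y) := \int_0^y k(u)\,du$ of the kernel, which is absolutely continuous on $[0,b-a]$ precisely because $k\in L_1$.

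For the left piece $L(x) := \int_a^x k(x-t)f(t)\,dt$, I would exploit $f\in AC([a,b])$ to write $f(t) = f(a) + \int_a^t f'(s)\,ds$ with $f'\in L_1([a,b])$, and split $L$ into $f(a)\int_a^x k(x-t)\,dt$ plus the double integral $D(x) := \int_a^x k(x-t)\int_a^t f'(s)\,ds\,dt$. The first summand equals $f(a)\,\Phi(x-a) = f(a)\int_a^x k(\xi-a)\,d\xi$, an indefinite integral of an $L_1$ function and hence absolutely continuous. For $D$ I would apply Fubini's theorem twice: a first interchange over the triangle $a\le s\le t\le x$ turns $D(x)$ into $\int_a^x f'(s)\,\Phi(x-s)\,ds$, and, after writing $\Phi(x-s)=\int_s^x k(\xi-s)\,d\xi$, a second interchange over $a\le s\le\xi\le x$ gives $D(x)=\int_a^x\Bigl(\int_a^\xi f'(s)\,k(\xi-s)\,ds\Bigr)\,d\xi$. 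The inner function is the convolution $f'\ast k$, which lies in $L_1$ by Young's inequality, so $D$ is again the indefinite integral of an $L_1$ function; therefore $L$ is absolutely continuous.

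For the right piece $R(x):=\int_x^b k(t-x)f(t)\,dt$ I would avoid repeating the computation by a reflection. Setting $\tilde f(\tau):=f(a+b-\tau)$, which is absolutely continuous since $f$ is and $\tau\mapsto a+b-\tau$ is affine, the substitution $t=a+b-\tau$ recasts $R(x)$ as $\int_a^{y} k(y-\tau)\tilde f(\tau)\,d\tau$ with $y:=a+b-x$. This is precisely a left-type integral of $\tilde f$ in the variable $y$, so by the previous paragraph it is absolutely continuous in $y\in[a,b]$; composing with the monotone affine map $x\mapsto a+b-x$ preserves absolute continuity, whence $R$ is absolutely continuous. Combining $L$ and $R$ with coefficients $p$ and $q$ yields $K_P^{1-\alpha}[f]\in AC([a,b])$.

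The only steps requiring genuine care are the two interchanges of the order of integration and the identification of the inner integral as an $L_1$ convolution. Fubini applies because the iterated integrand $|k(x-t)|\,|f'(s)|$ is absolutely integrable over the relevant triangle — a consequence of $k\in L_1([0,b-a])$ and $f'\in L_1([a,b])$ — and Young's inequality then guarantees $f'\ast k\in L_1([a,b])$. I expect this integrability bookkeeping, rather than any conceptual difficulty, to be the main obstacle.
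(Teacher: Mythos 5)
Your proof is correct. Note that the paper itself offers no proof of this statement --- it is simply cited as Theorem~2.4 of \cite{FVC_Gen} --- and the argument given there is essentially the one you reconstruct: write $f(t)=f(a)+\int_a^t f'(s)\,ds$, apply Fubini so that each convolution-type piece becomes $f(a)\int_0^{x-a}k_{1-\alpha}(u)\,du$ plus an indefinite integral of the $L_1$ convolution $k_{1-\alpha}\ast f'$, and conclude absolute continuity; your reflection trick for the right-hand piece merely replaces the symmetric computation carried out there, so the two proofs coincide in substance.
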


The generalized fractional derivatives $A_P^\alpha$
and $B_P^\alpha$ are defined in terms of the
generalized fractional integral $K$-op.

\begin{definition}[Generalized Riemann--Liouville fractional derivative]
\label{def:GRL}
Let $P$ be a given parameter set and $0<\alpha < 1$.
The operator $A_P^\alpha$ is defined by
$A_P^\alpha := D \circ K_P^{1-\alpha}$,
where $D$ denotes the standard derivative operator,
and is referred as the \emph{operator $A$} ($A$-op)
of order $\alpha$ and $p$-set $P$.
\end{definition}

\begin{remark}
Operator $A$ is well-defined for all functions $f$ such that
$K_P^{1-\alpha}[f]$ is differentiable. Theorem~\ref{theorem:exist}
assure us that the domain of $A$ is nonempty.
\end{remark}

\begin{definition}[Generalized Caputo fractional derivative]
\label{def:GC}
Let $P$ be a given parameter set and $\alpha \in (0,1)$.
The operator $B_P^\alpha$ is defined by
$B_P^\alpha := K_P^{1-\alpha} \circ D$,
where $D$ denotes the standard derivative operator,
and is referred as the \emph{operator $B$} ($B$-op)
of order $\alpha$ and $p$-set $P$.
\end{definition}

\begin{remark}
Operator $B$ is well-defined for differentiable functions.
\end{remark}

\begin{example}
The standard Riemann--Liouville and Caputo fractional derivatives
(see, \textrm{e.g.}, \cite{book:Kilbas,book:Podlubny,book:Klimek})
are easily obtained from the general kernel operators $A_P^\alpha$
and $B_P^\alpha$, respectively. Let $k_\alpha(t-\tau)
=\frac{1}{\Gamma(1-\alpha)}(t-\tau)^{-\alpha}$,
$\alpha \in (0,1)$. If $P=\langle a,t,b,1,0\rangle$, then
\begin{equation*}
A_{P}^\alpha[f](t)=\frac{1}{\Gamma(1-\alpha)}
\frac{d}{dt} \int\limits_a^t(t-\tau)^{-\alpha}f(\tau)d\tau
=: {_{a}}\textsl{D}^{\alpha}_{t}[f](t)
\end{equation*}
is the standard left Riemann--Liouville fractional derivative
of order $\alpha$, while
\begin{equation*}
B_{P}^\alpha[f](t)=\frac{1}{\Gamma(1-\alpha)}
\int\limits_a^t(t-\tau)^{-\alpha} f'(\tau)d\tau
=: {^{C}_{a}}\textsl{D}^{\alpha}_{t}[f](t)
\end{equation*}
is the standard left Caputo fractional derivative of order $\alpha$;
if $P=\langle a,t,b,0,1\rangle$, then
\begin{equation*}
- A_{P}^\alpha[f](t)
=- \frac{1}{\Gamma(1-\alpha)} \frac{d}{dt}
\int\limits_t^b(\tau-t)^{-\alpha}f(\tau)d\tau
=: {_{t}}\textsl{D}^{\alpha}_{b}[f](t)
\end{equation*}
is the standard right Riemann--Liouville
fractional derivative of order $\alpha$, while
\begin{equation*}
- B_{P}^\alpha[f](t) = - \frac{1}{\Gamma(1-\alpha)}
\int\limits_t^b(\tau-t)^{-\alpha} f'(\tau)d\tau
=: {^{C}_{t}}\textsl{D}^{\alpha}_{b} [f](t)
\end{equation*}
is the standard right Caputo fractional derivative of order $\alpha$.
\end{example}

The following theorems give integration by parts formulas for operators
$A$, $B$ and $K$. For detailed proofs we refer
the reader to \cite{FVC_Gen_Int,FVC_Gen}.

\begin{theorem}
\label{thm:gfip:Kop}
Let $\alpha \in (0,1)$, $P=\langle a,t,b,p,q\rangle$,
$k_{\alpha}$ be a square-integrable function
on $\Delta=[a,b]\times[a,b]$, and $f,g\in L_2\left([a,b]\right)$.
The generalized fractional integral $K_P^{\alpha}$ satisfies
the integration by parts formula
\begin{equation}
\label{eq:fracIP:K}
\int\limits_a^b g(x)K_P^{\alpha}\left[f\right](x)dx
=\int\limits_a^b f(x)K_{P^*}^{\alpha}\left[g\right](x)dx,
\end{equation}
where $P^{*}=<a,t,b,q,p>$.
\end{theorem}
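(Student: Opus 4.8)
The plan is to reduce the claimed identity to Fubini's theorem applied to the Fredholm representation of $K_P^\alpha$ introduced above. First I would rewrite both sides in terms of the kernel $G$: since $K_P^\alpha[f](x) = \int_a^b G(x,t) f(t)\, dt$ with $G(x,t) = p\,k_\alpha(x,t)$ for $t<x$ and $G(x,t) = q\,k_\alpha(t,x)$ for $t \ge x$, the left-hand side of \eqref{eq:fracIP:K} becomes the iterated integral $\int_a^b g(x) \left( \int_a^b G(x,t) f(t)\, dt \right) dx$. The heart of the argument is then to interchange the order of integration and to recognize the resulting inner integral against $g$ as $K_{P^*}^\alpha[g]$.

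Before swapping I must justify Fubini, and this is the step I expect to carry the technical weight. The hypotheses give $k_\alpha \in L_2(\Delta)$ and $f,g \in L_2([a,b])$; since $G$ coincides, on the two triangles $\{t<x\}$ and $\{t \ge x\}$, with constant multiples of $k_\alpha$, it is itself square-integrable on $\Delta$. Estimating the inner integral by Cauchy--Schwarz gives $\int_a^b |G(x,t)||f(t)|\, dt \le \|G(x,\cdot)\|_{L_2}\|f\|_{L_2}$, and integrating this against $|g|$ and applying Cauchy--Schwarz once more yields the bound $\|f\|_{L_2}\|g\|_{L_2}\|G\|_{L_2(\Delta)} < \infty$. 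Hence the double integral of $|g(x)\,G(x,t)\,f(t)|$ over the square $\Delta$ is finite, and Fubini's theorem applies.

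After interchanging, the expression reads $\int_a^b f(t) \left( \int_a^b G(x,t) g(x)\, dx \right) dt$, so it remains to verify that the inner integral equals $K_{P^*}^\alpha[g](t)$. This is a direct computation: splitting $\int_a^b G(x,t) g(x)\, dx$ according to the definition of $G$ over the regions $\{x>t\}$ and $\{x \le t\}$ produces $p\int_t^b k_\alpha(x,t) g(x)\, dx + q\int_a^t k_\alpha(t,x) g(x)\, dx$, which, after renaming the integration variable, is exactly the $K$-op of order $\alpha$ applied to $g$ but with the roles of $p$ and $q$ exchanged, that is, $K_{P^*}^\alpha[g](t)$ with $P^* = \langle a,t,b,q,p\rangle$. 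Substituting this identification back and renaming $t$ as $x$ gives the right-hand side of \eqref{eq:fracIP:K}, completing the proof. The only genuine obstacle is the integrability justification for Fubini; the kernel identification is a bookkeeping matter of tracking which of $p$ and $q$ attaches to which triangle.
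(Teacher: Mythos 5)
Your proof is correct and follows essentially the same route as the one the paper relies on (the paper defers to the cited references \cite{FVC_Gen_Int,FVC_Gen}, where the identity is likewise proved by writing both sides as a double integral, justifying the interchange of integration order via square-integrability of the kernel together with H\"{o}lder/Cauchy--Schwarz, and applying Fubini's theorem). Your packaging through the Fredholm kernel $G$ is only a cosmetic variant of splitting the integral over the two triangles $\{t<x\}$ and $\{t\geq x\}$, and your identification of the swapped inner integral with $K_{P^*}^{\alpha}[g]$, with $p$ and $q$ exchanged, is exactly the bookkeeping the cited proof performs.
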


\begin{theorem}
\label{thm:gfip}
Let $\alpha \in (0,1)$, $P=\langle a,t,b,p,q\rangle$, and $k_{\alpha}$
be a square integrable function on $\Delta=[a,b]\times [a,b]$.
If functions $f,K_{P^*}^{1-\alpha}[g]  \in AC([a,b])$, then
\begin{equation}
\label{eq:fip:2}
\int\limits_a^b g(x) B_{P}^\alpha \left[f\right](x)dx
=\left. f(x) K_{P^*}^{1-\alpha}\left[g\right](x)\right|_a^b
-\int_a^b f(x) A_{P^*}^\alpha\left[g\right](x)dx,
\end{equation}
where $P^*=<a,t,b,q,p>$.
\end{theorem}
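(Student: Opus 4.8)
The plan is to reduce the statement to the already-established integration by parts formula for the $K$-op (Theorem~\ref{thm:gfip:Kop}) together with the classical integration by parts formula for absolutely continuous functions. The whole argument hinges on the two defining compositions $B_P^\alpha = K_P^{1-\alpha}\circ D$ and $A_{P^*}^\alpha = D\circ K_{P^*}^{1-\alpha}$, which let me trade a generalized Caputo derivative acting on $f$ for a generalized Riemann--Liouville derivative acting on $g$, at the cost of a boundary term.

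First I would rewrite the left-hand side using Definition~\ref{def:GC}, namely $B_P^\alpha[f](x)=K_P^{1-\alpha}[f'](x)$, so that
\[
\int_a^b g(x)B_P^\alpha[f](x)\,dx=\int_a^b g(x)K_P^{1-\alpha}[f'](x)\,dx.
\]
Since $\alpha\in(0,1)$ yields $1-\alpha\in(0,1)$, I can apply Theorem~\ref{thm:gfip:Kop} with order $1-\alpha$ to the pair $(f',g)$; this shifts the $K$-op off $f'$ and onto $g$ while switching the parameter set from $P$ to $P^*$, giving
\[
\int_a^b g(x)K_P^{1-\alpha}[f'](x)\,dx=\int_a^b f'(x)K_{P^*}^{1-\alpha}[g](x)\,dx.
\]

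The right-hand integrand is now an ordinary product of $f'$ with the function $u:=K_{P^*}^{1-\alpha}[g]$. Because $f\in AC([a,b])$ and, by hypothesis, $u=K_{P^*}^{1-\alpha}[g]\in AC([a,b])$, the classical integration by parts formula for absolutely continuous functions applies and produces
\[
\int_a^b f'(x)u(x)\,dx=\Bigl[f(x)u(x)\Bigr]_a^b-\int_a^b f(x)u'(x)\,dx.
\]
It then remains to identify $u'=\tfrac{d}{dx}K_{P^*}^{1-\alpha}[g]=A_{P^*}^\alpha[g]$ by Definition~\ref{def:GRL}, which turns the last integral into $\int_a^b f(x)A_{P^*}^\alpha[g](x)\,dx$ and reproduces exactly~\eqref{eq:fip:2}.

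I expect the only delicate point to be the justification of the first step: to invoke Theorem~\ref{thm:gfip:Kop} one needs $f'$ and $g$ to lie in $L_2([a,b])$ and the kernel of order $1-\alpha$ to be square integrable on $\Delta$, whereas $f\in AC([a,b])$ only guarantees $f'\in L_1([a,b])$. I would therefore either strengthen the hypotheses so that the relevant kernel and the functions $f'$, $g$ meet the $L_2$ requirements of Theorem~\ref{thm:gfip:Kop}, or observe that the absolute continuity of $f$ and of $K_{P^*}^{1-\alpha}[g]$ is precisely what makes the boundary term $\bigl[f\,K_{P^*}^{1-\alpha}[g]\bigr]_a^b$ and the final integral well defined. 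By contrast, the second (classical) integration by parts is entirely routine once absolute continuity is in place.
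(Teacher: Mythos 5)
Your proposal is correct and follows essentially the same route as the paper's: the paper itself defers the proof to \cite{FVC_Gen_Int,FVC_Gen}, and the argument given there is precisely your reduction --- expand $B_{P}^\alpha[f]=K_{P}^{1-\alpha}[f']$ by Definition~\ref{def:GC}, transpose the $K$-op onto $g$ (Theorem~\ref{thm:gfip:Kop}, i.e.\ Fubini), apply classical integration by parts for absolutely continuous functions, and identify $\frac{d}{dx}K_{P^*}^{1-\alpha}[g]=A_{P^*}^\alpha[g]$ via Definition~\ref{def:GRL}. The $L_1$-versus-$L_2$ integrability caveat you flag is real, but it is a looseness of the theorem's hypotheses as stated rather than a defect of your argument (the cited proofs perform the same interchange under the same assumptions), so your proof matches the intended one.
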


\begin{theorem}
\label{thm:IPL1}
Let $0<\alpha<1$, $P=\langle a,x,b,p,q\rangle$, and $k_\alpha$
be a difference kernel such that $k_{\alpha}\in L_1[0,b-a]$.
If $f\in L_1\left([a,b]\right)$ and $g\in C\left([a,b]\right)$,
then the operator $K_P^{\alpha}$ satisfies the integration
by parts formula \eqref{eq:fracIP:K}.
\end{theorem}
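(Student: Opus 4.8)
The plan is to reduce the identity~\eqref{eq:fracIP:K} to a single application of the Fubini--Tonelli theorem. First I would unfold the definition of the $K$-op and use that $k_\alpha$ is a difference kernel, $k_\alpha(x,t)=k_\alpha(x-t)$, to write the left-hand side as a sum of two iterated integrals over triangular regions:
\begin{equation*}
\int_a^b g(x) K_P^\alpha[f](x)\, dx
= p \int_a^b g(x) \int_a^x k_\alpha(x-t) f(t)\, dt\, dx
+ q \int_a^b g(x) \int_x^b k_\alpha(t-x) f(t)\, dt\, dx .
\end{equation*}
Here the first inner integral runs over the triangle $\{a \le t \le x \le b\}$ and the second over $\{a \le x \le t \le b\}$. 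Interchanging the order of integration in each then turns the first term into an integral of $f(t)$ against $\int_t^b k_\alpha(x-t) g(x)\,dx$ and the second into an integral of $f(t)$ against $\int_a^t k_\alpha(t-x) g(x)\,dx$.

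The next step is to recognize that the recombination of these two pieces is exactly $K_{P^*}^\alpha[g]$. Since $P^*=\langle a,t,b,q,p\rangle$ simply interchanges the weights $p$ and $q$, the difference-kernel form of the $K$-op gives
\begin{equation*}
K_{P^*}^\alpha[g](t)
= q \int_a^t k_\alpha(t-x) g(x)\, dx
+ p \int_t^b k_\alpha(x-t) g(x)\, dx .
\end{equation*}
Thus the $p$-weighted contribution coming from the first double integral and the $q$-weighted contribution from the second assemble precisely into $\int_a^b f(t) K_{P^*}^\alpha[g](t)\, dt$, which is the right-hand side of~\eqref{eq:fracIP:K}. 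The only care needed here is bookkeeping of the kernel arguments $k_\alpha(x-t)$ versus $k_\alpha(t-x)$ under the swap, but the difference-kernel hypothesis keeps these consistent throughout.

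The main obstacle, and the only genuinely analytic point, is justifying the interchange of the order of integration, since neither $f$ nor $k_\alpha$ is assumed continuous. I would handle this by Tonelli's theorem applied to the absolute values. Because $g\in C([a,b])$ is bounded, say $|g(x)|\le M$, the iterated integral of the modulus of the first integrand is dominated by
\begin{equation*}
M \int_a^b \int_a^x |k_\alpha(x-t)|\, |f(t)|\, dt\, dx
= M \bigl\| \widetilde K[\, |f|\, ] \bigr\|_{L_1([a,b])},
\end{equation*}
where $\widetilde K$ denotes the $K$-op with kernel $|k_\alpha|$ and $p$-set $\langle a,x,b,1,0\rangle$. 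Since $|k_\alpha|\in L_1([0,b-a])$ is again a difference kernel and $|f|\in L_1([a,b])$, Theorem~\ref{theorem:L1} guarantees that $\widetilde K:L_1([a,b])\to L_1([a,b])$ is bounded, so this quantity is finite; the same estimate controls the second triangle. With finiteness of the absolute iterated integrals established, Fubini's theorem legitimizes the interchange on each region, and the identity~\eqref{eq:fracIP:K} follows by combining the two paragraphs above.
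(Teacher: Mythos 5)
Your proof is correct: unfolding the operator, splitting over the two triangles $\{a\le t\le x\le b\}$ and $\{a\le x\le t\le b\}$, interchanging via Fubini--Tonelli (with absolute integrability supplied by the boundedness of $g$ and by Theorem~\ref{theorem:L1} applied to the difference kernel $|k_\alpha|$ and $|f|\in L_1$), and reassembling the swapped weights into $K_{P^*}^{\alpha}[g]$ is a complete argument. The paper itself gives no proof of this theorem, deferring to \cite{FVC_Gen_Int,FVC_Gen}, and the proof given there is this same Fubini-type interchange, so your approach coincides with the one the paper relies on.
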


\begin{theorem}
\label{thm:gfip2}
Let $\alpha \in (0,1)$, $P=\langle a,t,b,p,q\rangle$, and $k_{\alpha}\in L_1\left([0,b-a]\right)$
be a difference kernel. If functions $f, g \in AC([a,b])$, then formula \eqref{eq:fip:2} holds.
\end{theorem}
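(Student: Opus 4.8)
The plan is to reduce \eqref{eq:fip:2} to the integration by parts formula for the $K$-op (Theorem~\ref{thm:IPL1}) followed by the classical Leibniz integration by parts for absolutely continuous functions, using the definitions $B_P^\alpha = K_P^{1-\alpha}\circ D$ and $A_{P^*}^\alpha = D\circ K_{P^*}^{1-\alpha}$. Since $f\in AC([a,b])$, its classical derivative $f'=Df$ exists almost everywhere and belongs to $L_1([a,b])$, so by Definition~\ref{def:GC} we have $B_P^\alpha[f]=K_P^{1-\alpha}[f']$, and the left-hand side of \eqref{eq:fip:2} becomes $\int_a^b g(x)\,K_P^{1-\alpha}[f'](x)\,dx$.

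First I would apply Theorem~\ref{thm:IPL1} with order $1-\alpha$ in place of $\alpha$ (legitimate because $1-\alpha\in(0,1)$ and $k_{1-\alpha}\in L_1([0,b-a])$ is a difference kernel), taking $f'\in L_1([a,b])$ in the role of the $L_1$-function and $g\in AC([a,b])\subset C([a,b])$ in the role of the continuous function. This yields
\[
\int_a^b g(x)\,K_P^{1-\alpha}[f'](x)\,dx=\int_a^b f'(x)\,K_{P^*}^{1-\alpha}[g](x)\,dx .
\]

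Next I would invoke Theorem~\ref{theorem:exist}, applied to $g$ and the $p$-set $P^*=\langle a,t,b,q,p\rangle$, to conclude $K_{P^*}^{1-\alpha}[g]\in AC([a,b])$. Now both $f$ and $K_{P^*}^{1-\alpha}[g]$ are absolutely continuous, hence so is their product, and the classical integration by parts formula applies:
\[
\int_a^b f'(x)\,K_{P^*}^{1-\alpha}[g](x)\,dx
=\left. f(x)\,K_{P^*}^{1-\alpha}[g](x)\right|_a^b
-\int_a^b f(x)\,\frac{d}{dx}\left(K_{P^*}^{1-\alpha}[g](x)\right)\,dx .
\]
Finally, by Definition~\ref{def:GRL} the integrand of the last term is $f(x)\,(D\circ K_{P^*}^{1-\alpha})[g](x)=f(x)\,A_{P^*}^\alpha[g](x)$, and combining the three steps gives exactly \eqref{eq:fip:2}.

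The two bookkeeping identities are routine; the only real content is justifying that $K_{P^*}^{1-\alpha}[g]$ is absolutely continuous, so that the classical Leibniz rule is available. This is precisely what Theorem~\ref{theorem:exist} supplies, and it is the step I expect to be the linchpin of the argument. A minor point to verify is that the hypotheses of Theorem~\ref{thm:IPL1} match the pairing of $g$ against $K_P^{1-\alpha}[f']$ rather than the reverse, but this is immediate since $f'\in L_1([a,b])$ and $g$ is continuous.
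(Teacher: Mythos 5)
Your proof is correct, and it is essentially the intended one: the paper itself gives no inline proof of Theorem~\ref{thm:gfip2} (it defers to \cite{FVC_Gen_Int,FVC_Gen}), and the argument used there is exactly your three-step reduction --- write $B_{P}^\alpha[f]=K_{P}^{1-\alpha}[f']$, apply the $K$-op integration by parts (Theorem~\ref{thm:IPL1} at order $1-\alpha$, with $f'\in L_1$ and $g$ continuous), and then use Theorem~\ref{theorem:exist} to conclude $K_{P^*}^{1-\alpha}[g]\in AC([a,b])$ so that the classical Lebesgue integration by parts and the identity $D\circ K_{P^*}^{1-\alpha}=A_{P^*}^\alpha$ finish the computation. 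You also handled the one notational wrinkle correctly: the hypothesis ``$k_{\alpha}\in L_1([0,b-a])$'' must be read as a condition on the kernel $k_{1-\alpha}$ actually appearing in the operators of \eqref{eq:fip:2}, which is the only reading under which the statement (and Theorems~\ref{theorem:exist} and~\ref{thm:IPL1}) can be invoked.
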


For $\mathbf{f}=\left[f_1,\dots,f_N\right]:[a,b]\rightarrow\mathbb{R}^N$,
where $N\in\mathbb{N}$, we put
\begin{equation*}
\begin{split}
A_{P}^\alpha \left[\mathbf{f}\right](x)
&:=\left[A_{P}^\alpha \left[f_1\right](x),
\dots,A_{P}^\alpha \left[f_N\right](x)\right],\\
B_{P}^\alpha \left[\mathbf{f}\right](x)
&:=\left[B_{P}^\alpha \left[f_1\right](x),
\dots,B_{P}^\alpha \left[f_N\right](x)\right],\\
K_{P}^\alpha \left[\mathbf{f}\right](x)
&:=\left[K_{P}^\alpha \left[f_1\right](x),
\dots,K_{P}^\alpha \left[f_N\right](x)\right].
\end{split}
\end{equation*}


\section{The generalized fundamental variational problem}
\label{sec:fp}

We consider the problem of finding a function
$\mathbf{y}=\left[y_1,\dots,y_N\right]$
that gives an extremum (minimum or maximum)
to the functional
\begin{equation}
\label{eq:1}
\mathcal{J}(\mathbf{y})=K_{P}^\alpha\left[t \mapsto
F\left(t,\mathbf{y}(t),\mathbf{y}'(t),B_{P_1}^{\beta_1}
\left[\mathbf{y}\right](t),\dots,B_{P_n}^{\beta_n} \left[\mathbf{y}\right](t),
K_{R_1}^{\gamma_1}\left[\mathbf{y}\right](t),
\dots, K_{R_m}^{\gamma_m}\left[\mathbf{y}\right](t)\right)\right](b)
\end{equation}
subject to the boundary conditions
\begin{equation}
\label{eq:2}
\mathbf{y}(a)=\mathbf{y}_a, \quad \mathbf{y}(b)=\mathbf{y}_b,
\end{equation}
where $\alpha,\beta_i, \gamma_k\in(0,1)$, $P=<a,b,b,1,0>$,
$P_i=<a,t,b,p_i,q_i>$, and $R_k=<a,t,b,r_k,s_k>$, $i=1,\dots,n$, $k=1,\dots,m$.
For simplicity of notation, we introduce the operator
$\left\{ \cdot \right\}_{P_D, R_I}^{\beta,\gamma}$ defined by
\begin{equation*}
\left\{\mathbf{y}\right\}_{P_D, R_I}^{\beta,\gamma}(t)
:=\left(t,\mathbf{y}(t),\mathbf{y}'(t),B_{P_D}^\beta
\left[\tau \mapsto \mathbf{y}(\tau)\right](t),
K_{R_I}^\gamma \left[\tau \mapsto \mathbf{y}(\tau)\right](t)\right),
\end{equation*}
where
\begin{equation*}
B_{P_D}^\beta:=\left(B_{P_1}^{\beta_1}, \dots,B_{P_n}^{\beta_n}\right),
\quad K_{R_I}^\gamma:=\left(K_{R_1}^{\gamma_1},\dots, K_{R_m}^{\gamma_m}\right).
\end{equation*}
The operator $K_{P}^\alpha$ has kernel $k_\alpha(x,t)$
and, for $i=1,\dots,n$ and $k=1,\dots,m$, operators $B_{P_i}^{\beta_i}$
and $K_{R_k}^{\gamma_k}$ have kernels $h_{1-\beta_i}(t,\tau)$
and $h_{\gamma_k}(t,\tau)$, respectively.
In the sequel we assume that:
\begin{enumerate}
\item[(H1)] the Lagrangian $F\in C^1\left([a,b]
\times\mathbb{R}^{N\times(n+m+2)};\mathbb{R}\right)$;

\item[(H2)] functions
$D\left[t \mapsto \partial_{N+j}F\left\{\mathbf{y}\right\}_{P_D, R_I}^{\beta,\gamma}(t)
k_\alpha(b,t)\right]$,
$A_{P_i^*}^{\beta_i}\left[\tau
\mapsto k_\alpha(b,\tau)\partial_{(i+1)N+j}
F\left\{\mathbf{y}\right\}_{P_D, R_I}^{\beta,\gamma}(\tau)\right]$,
$K_{R_k^*}^{\gamma_k}\left[\tau \mapsto k_\alpha(b,\tau)
\partial_{(n+1+k)N+j} F\left\{\mathbf{y}\right\}_{P_D, R_I}^{\beta,\gamma}(\tau)\right]$
and $t \mapsto k_\alpha(b,t)\partial_{j}
F \left\{\mathbf{y}\right\}_{P_D, R_I}^{\beta,\gamma}(t)$
are continuous on $(a,b)$, $j=2,\dots,N+1$, $i=1,\dots,n$, $k=1,\dots,m$;

\item[(H3)] functions $t \mapsto \partial_{N+j}
F\left\{\mathbf{y}\right\}_{P_D, R_I}^{\beta,\gamma}(t)k_\alpha(b,t)$ and
$K_{P_i^*}^{1-\beta_i}\left[\tau \mapsto k_\alpha(b,\tau)
\partial_{(i+1)N+j} F\left\{\mathbf{y}\right\}_{P_D, R_I}^{\beta,\gamma}(\tau)\right]$
$\in AC([a,b])$, $j=2,\dots,N+1$, $i=1,\dots,n$;

\item[(H4)] for $i=1,\dots,n$, $k=1,\dots,m$, the kernels $k_\alpha(x,t)$,
$h_{1-\beta_i}(t,\tau)$ and $h_{\gamma_k}(t,\tau)$
are such that we are able to use Theorems~\ref{thm:gfip:Kop},
\ref{thm:gfip}, \ref{thm:IPL1} and/or \ref{thm:gfip2}.
\end{enumerate}

\begin{definition}
A function $ \mathbf{y}\in C^1\left([a,b];\mathbb{R}^N\right)$ is said to be
admissible for the fractional variational problem \eqref{eq:1}--\eqref{eq:2}
if functions $B_{P_i}^{\beta_i}[\mathbf{y}]$ and $K_{R_k}^{\gamma_k}[\mathbf{y}]$,
$i=1,\dots,n$, $k=1,\dots,m$ exist and are continuous on the interval $[a,b]$,
and $\mathbf{y}$ satisfies the given boundary conditions \eqref{eq:2}.
\end{definition}

\begin{theorem}
\label{theorem:ELCaputo}
If $\mathbf{y}$ is a solution to problem \eqref{eq:1}--\eqref{eq:2},
then $\mathbf{y}$ satisfies the system of generalized Euler--Lagrange equations
\begin{multline}
\label{eq:eqELCaputo}
k_\alpha(b,t)\partial_j F \left\{\mathbf{y}\right\}_{P_D, R_I}^{\beta,\gamma}(t)
-\sum_{i=1}^n A_{P_i^*}^{\beta_i}\left[\tau \mapsto k_\alpha(b,\tau)\partial_{(i+1)N+j}
F\left\{\mathbf{y}\right\}_{P_D, R_I}^{\beta,\gamma}(\tau)\right](t)\\
+\sum_{k=1}^m K_{R_k^*}^{\gamma_k}\left[\tau \mapsto k_\alpha(b,\tau)\partial_{(n+1+k)N+j}
F\left\{\mathbf{y}\right\}_{P_D, R_I}^{\beta,\gamma}(\tau)\right](t)
-\frac{d}{dt}\left(\partial_{N+j}F\left\{\mathbf{y}\right\}_{P_D, R_I}^{\beta,\gamma}(t)
k_\alpha(b,t)\right) =0
\end{multline}
for all $t\in(a,b)$, $j=2,\dots,N+1$.
\end{theorem}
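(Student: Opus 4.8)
The plan is to run the classical first-variation argument, adapted to the generalized fractional setting. First I would fix an admissible extremizer $\mathbf{y}$ and consider the perturbed family $\mathbf{y}+\varepsilon\mathbf{h}$, where $\mathbf{h}\in C^1([a,b];\mathbb{R}^N)$ satisfies $\mathbf{h}(a)=\mathbf{h}(b)=\mathbf{0}$ so that the boundary conditions \eqref{eq:2} are preserved. Because $P=\langle a,b,b,1,0\rangle$, the functional collapses to the ordinary integral $\mathcal{J}(\mathbf{y})=\int_a^b k_\alpha(b,t)\,F\{\mathbf{y}\}_{P_D,R_I}^{\beta,\gamma}(t)\,dt$. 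Exploiting the linearity of the operators $B_{P_i}^{\beta_i}$ and $K_{R_k}^{\gamma_k}$, every composite argument of $F$ is affine in $\varepsilon$, so I can differentiate under the integral sign to form the first variation $\frac{d}{d\varepsilon}\mathcal{J}(\mathbf{y}+\varepsilon\mathbf{h})\big|_{\varepsilon=0}$, this interchange being licensed by (H1) together with the continuity assumptions. A necessary condition for an extremum is that this first variation vanish for every such $\mathbf{h}$.

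Expanding by the chain rule produces, for each component $h_{j-1}$, four kinds of terms weighted by $k_\alpha(b,t)$: a term in $h_{j-1}$ itself (from $\partial_j F$), a term in $h'_{j-1}$ (from $\partial_{N+j}F$), terms in $B_{P_i}^{\beta_i}[h_{j-1}]$ (from $\partial_{(i+1)N+j}F$), and terms in $K_{R_k}^{\gamma_k}[h_{j-1}]$ (from $\partial_{(n+1+k)N+j}F$). The heart of the argument is to rewrite each term so that $h_{j-1}$ appears undifferentiated and with no operator acting on it. For the classical-derivative term I would integrate by parts in the usual way; for the generalized-Caputo terms I would invoke Theorem~\ref{thm:gfip} (or Theorem~\ref{thm:gfip2}), which transfers $B_{P_i}^{\beta_i}$ onto the coefficient as $A_{P_i^*}^{\beta_i}$; and for the generalized-integral terms I would invoke Theorem~\ref{thm:gfip:Kop} (or Theorem~\ref{thm:IPL1}), which transfers $K_{R_k}^{\gamma_k}$ onto the coefficient as $K_{R_k^*}^{\gamma_k}$. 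Hypothesis (H4) is precisely what certifies that these formulas apply to the kernels at hand.

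Each integration by parts generates a boundary contribution, namely $\big[k_\alpha(b,t)\partial_{N+j}F\,h_{j-1}\big]_a^b$ from the classical term and $\big[h_{j-1}(t)\,K_{P_i^*}^{1-\beta_i}[\cdots](t)\big]_a^b$ from the Caputo terms; both vanish thanks to $\mathbf{h}(a)=\mathbf{h}(b)=\mathbf{0}$, where (H3) is used to guarantee that the bracketed quantities are well defined at the endpoints. After collecting terms the first variation assumes the form $\int_a^b\sum_{j=2}^{N+1}E_j(t)\,h_{j-1}(t)\,dt=0$, in which $E_j$ is exactly the left-hand side of \eqref{eq:eqELCaputo}. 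Since the components of $\mathbf{h}$ may be chosen independently and are otherwise arbitrary, and each $E_j$ is continuous on $(a,b)$ by (H2), the fundamental lemma of the calculus of variations forces $E_j\equiv 0$ for every $j$, which is the asserted system of generalized Euler--Lagrange equations.

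The step I expect to demand the most care is not the algebra of collecting terms, which is routine, but the verification that the functions actually produced by the chain rule meet the exact regularity demanded by Theorems~\ref{thm:gfip:Kop}--\ref{thm:gfip2} and that the corresponding endpoint evaluations are legitimate. Matching each appearing coefficient to the hypotheses $(H1)$--$(H4)$, and in particular confirming the absolute-continuity requirements needed so that the boundary brackets and the transferred operators make sense, is where the genuine bookkeeping lies.
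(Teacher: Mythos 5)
Your proposal is correct and follows exactly the route the paper relies on: the paper's own ``proof'' is just a citation to Theorem~4.2 of \cite{FVC_Gen_Int}, whose argument is precisely your first-variation computation --- reduce $K_P^\alpha[\cdot](b)$ to $\int_a^b k_\alpha(b,t)(\cdot)\,dt$, expand along $\mathbf{y}+\varepsilon\mathbf{h}$, transfer the operators via Theorems~\ref{thm:gfip:Kop}--\ref{thm:gfip2}, kill the boundary terms with $\mathbf{h}(a)=\mathbf{h}(b)=\mathbf{0}$, and apply the fundamental lemma under the continuity granted by (H2). Your sign bookkeeping and the matching of hypotheses (H1)--(H4) to each integration-by-parts step are also consistent with \eqref{eq:eqELCaputo}.
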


\begin{proof}
The proof is analogous to that of \cite[Theorem~4.2]{FVC_Gen_Int}.
\end{proof}


\section{Generalized free-boundary variational problem}
\label{sec:fpfb}

Assume now that in problem \eqref{eq:1}--\eqref{eq:2}
the boundary conditions \eqref{eq:2} are substituted by
\begin{equation}
\label{eq:Free1}
\mathbf{y}(a) \textnormal{ is free }
\textnormal{ and } \mathbf{y}(b)=\mathbf{y}_b.
\end{equation}

\begin{theorem}
\label{theorem:NatBound}
If $\mathbf{y}$ is a solution to the problem of extremizing functional \eqref{eq:1}
with \eqref{eq:Free1} as the boundary conditions, then $\mathbf{y}$ satisfies
the system of Euler--Lagrange equations \eqref{eq:eqELCaputo}. Moreover,
the extra system of natural boundary conditions
\begin{equation}
\label{eq:NatBoundCond}
\partial_{N+j} F\left\{\mathbf{y}\right\}_{P_D,R_I}^{\beta,\gamma}(a)k_\alpha(b,a)
+\sum_{i=1}^nK_{P_i^*}^{1-\beta_i}\left[\tau \mapsto \partial_{(i+1)N+j}
F\left\{\mathbf{y}\right\}_{P_D, R_I}^{\beta,\gamma}(\tau)k_\alpha(b,\tau)\right](a)=0,
\end{equation}
$j=2,\dots,N+1$, holds.
\end{theorem}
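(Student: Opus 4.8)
The plan is to follow the standard variational argument, now permitting variations that need not vanish at the left endpoint. First I would fix an admissible solution $\mathbf{y}$ and consider the one-parameter family $\mathbf{y}+\epsilon\mathbf{h}$, where $\mathbf{h}=[h_1,\dots,h_N]\in C^1([a,b];\mathbb{R}^N)$ is an arbitrary variation subject only to $\mathbf{h}(b)=\mathbf{0}$ (the right endpoint being fixed by \eqref{eq:Free1}), while $\mathbf{h}(a)$ is left free. Since $P=\langle a,b,b,1,0\rangle$, the functional \eqref{eq:1} reduces to $\mathcal{J}(\mathbf{y})=\int_a^b k_\alpha(b,t)\,F\left\{\mathbf{y}\right\}_{P_D,R_I}^{\beta,\gamma}(t)\,dt$. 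Using (H1) to differentiate under the integral sign, together with the linearity of $B_{P_i}^{\beta_i}$ and $K_{R_k}^{\gamma_k}$, I would compute $\left.\frac{d}{d\epsilon}\mathcal{J}(\mathbf{y}+\epsilon\mathbf{h})\right|_{\epsilon=0}$ and set it to zero; by the chain rule this first variation splits, for each index $j=2,\dots,N+1$, into integrals of $\partial_j F$, $\partial_{N+j}F$, $\partial_{(i+1)N+j}F$ and $\partial_{(n+1+k)N+j}F$ multiplied against $h_{j-1}$, $h_{j-1}'$, $B_{P_i}^{\beta_i}[h_{j-1}]$ and $K_{R_k}^{\gamma_k}[h_{j-1}]$, respectively.

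The second step is to transfer every operator off the variation onto its coefficient. For the classical-derivative term I would integrate by parts once, yielding the boundary term $\left[k_\alpha(b,t)\,\partial_{N+j}F\,h_{j-1}\right]_a^b$ together with the integrand $-\frac{d}{dt}\!\left(k_\alpha(b,t)\,\partial_{N+j}F\right)h_{j-1}$. For each $B$-term I would apply Theorem~\ref{thm:gfip} (or Theorem~\ref{thm:gfip2}, as (H4) permits), with $g(\tau)=k_\alpha(b,\tau)\,\partial_{(i+1)N+j}F$, converting $\int g\,B_{P_i}^{\beta_i}[h_{j-1}]$ into $-\int h_{j-1}\,A_{P_i^*}^{\beta_i}[g]$ plus the boundary term $\left[h_{j-1}\,K_{P_i^*}^{1-\beta_i}[g]\right]_a^b$; the absolute continuity assumed in (H2)--(H3) is precisely what makes these boundary evaluations and this formula legitimate. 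For each $K$-term I would use Theorem~\ref{thm:gfip:Kop} (or Theorem~\ref{thm:IPL1}), which produces $\int h_{j-1}\,K_{R_k^*}^{\gamma_k}[g]$ and, crucially, contributes no boundary term.

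Collecting the integrands that multiply $h_{j-1}$ reproduces exactly the left-hand side of \eqref{eq:eqELCaputo}, while the residual consists solely of the boundary terms from the classical and $B$-operator integrations. To finish, I would run the usual two-stage argument, varying one component at a time: first restrict to variations with $\mathbf{h}(a)=\mathbf{h}(b)=\mathbf{0}$, so all boundary terms drop and the fundamental lemma of the calculus of variations (applicable since the coefficient is continuous by (H2)) forces the Euler--Lagrange system \eqref{eq:eqELCaputo}; then return to the general case $\mathbf{h}(b)=\mathbf{0}$ with $\mathbf{h}(a)$ free, so that \eqref{eq:eqELCaputo} annihilates the integral and only the boundary terms at $t=a$ survive (those at $t=b$ vanish because $\mathbf{h}(b)=\mathbf{0}$). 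Since each $h_{j-1}(a)$ is arbitrary, its coefficient must vanish, which is precisely \eqref{eq:NatBoundCond}. The main obstacle is the bookkeeping in the second step: one must apply the correct integration-by-parts theorem to each operator and remember that the $K$-operator formula yields no boundary term whereas the $B$-operator formula does, so that only the $\partial_{N+j}F$ and $K_{P_i^*}^{1-\beta_i}$ contributions appear in \eqref{eq:NatBoundCond}; verifying that (H2)--(H4) genuinely underwrite every boundary evaluation and every application of the integration-by-parts theorems is the technical crux.
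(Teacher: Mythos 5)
Your proposal is correct and is essentially the argument the paper relies on: the paper's own ``proof'' is only a pointer to Theorem~5.1 of the cited reference \cite{FVC_Gen_Int}, whose proof is exactly this first-variation computation with $h(b)=0$, $h(a)$ free, followed by integration by parts via Theorems~\ref{thm:gfip:Kop}--\ref{thm:gfip2} (boundary terms arising only from the classical and $B$-operator terms) and the two-stage use of the fundamental lemma. Your bookkeeping — the coefficient of $h_{j-1}$ reproducing \eqref{eq:eqELCaputo} and the surviving endpoint terms at $t=a$ giving \eqref{eq:NatBoundCond} — matches that argument.
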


\begin{proof}
The proof is analogous to that of \cite[Theorem~5.1]{FVC_Gen_Int}.
\end{proof}


\section{Generalized isoperimetric problem}
\label{sec:fp:iso}

Let $\xi\in\mathbb{R}$. Among all functions
$\mathbf{y}:[a,b]\rightarrow\mathbb{R}^N$
satisfying the boundary conditions
\begin{equation}
\label{eq:IsoBound}
\mathbf{y}(a)=\mathbf{y}_a, \quad \mathbf{y}(b)=\mathbf{y}_b,
\end{equation}
and an isoperimetric constraint of the form
\begin{equation}
\label{eq:IsoConstr}
\mathcal{I}\left(\mathbf{y}\right)
=K_{P}^\alpha\left[G\left\{\mathbf{y}\right\}_{P_D, R_I}^{\beta,\gamma}\right](b)=\xi,
\end{equation}
we look for those that extremize (\textrm{i.e.}, minimize or maximize) the functional
\begin{equation}
\label{eq:IsoFunct}
\mathcal{J}\left(\mathbf{y}\right)
=K_{P}^\alpha\left[F\left\{\mathbf{y}\right\}_{P_D, R_I}^{\beta,\gamma}\right](b).
\end{equation}
For $i=1,\dots, n$, $k=1,\dots,m$ operators $K_{P}^\alpha$, $B_{P_i}^{\beta_i}$ and $K_{R_k}^{\gamma_k}$,
as well as function $F$, are the same as in problem \eqref{eq:1}--\eqref{eq:2}.
Moreover, we assume that functional \eqref{eq:IsoConstr} satisfies hypotheses (H1)--(H4).

\begin{definition}
A function $\mathbf{y} : [a,b]\to\mathbb R^N$
is said to be \emph{admissible} for problem \eqref{eq:IsoBound}--\eqref{eq:IsoFunct}
if functions $B_{P_i}^{\beta_i}[\mathbf{y}]$ and $K_{R_k}^{\gamma_k}[\mathbf{y}]$,
$i=1,\dots,n$, $k=1,\dots,m$, exist and are continuous on $[a,b]$, and $\mathbf{y}$ satisfies
the given boundary conditions \eqref{eq:IsoBound} and the isoperimetric constraint \eqref{eq:IsoConstr}.
\end{definition}

\begin{definition}
An admissible function $\mathbf{y}\in C^1\left([a,b],\mathbb{R}^N\right)$
is said to be an \emph{extremal} for $\mathcal{I}$ if it satisfies
the system of Euler--Lagrange equations \eqref{eq:eqELCaputo}
associated with functional in \eqref{eq:IsoConstr}.
\end{definition}

\begin{theorem}
\label{theorem:EL2}
If $\mathbf{y}$ is a solution to the isoperimetric problem
\eqref{eq:IsoBound}--\eqref{eq:IsoFunct} and
is not an extremal for $\mathcal{I}$, then
there exists a real constant $\lambda$ such that
\begin{multline*}
k_\alpha(b,t)\partial_j H \left\{\mathbf{y}\right\}_{P_D,R_I}^{\beta,\gamma}(t)
+\sum_{k=1}^m K_{R_k^*}^{\gamma_k}\left[\tau \mapsto k_\alpha(b,\tau)\partial_{(n+1+k)N+j}
H\left\{\mathbf{y}\right\}_{P_D,R_I}^{\beta,\gamma}(\tau)\right](t)\\
-\sum_{i=1}^n A_{P_i^*}^{\beta_i}\left[\tau \mapsto k_\alpha(b,\tau)\partial_{(i+1)N+j}
H\left\{\mathbf{y}\right\}_{P_D,R_I}^{\beta,\gamma}(\tau)\right](t)
-\frac{d}{dt}\left(\partial_{j+N}
H\left\{\mathbf{y}\right\}_{P_D,R_I}^{\beta,\gamma}(t)k_\alpha(b,t)\right)=0
\end{multline*}
for all $t\in(a,b)$, $j=2,\dots,N+1$,
where $H(t,y,u,v,w)=F(t,y,u,v,w)-\lambda G(t,y,u,v,w)$,
$P_i^*=<a,t,b,q_i,p_i>$, and $R_k^*=<a,t,b,s_k,r_k>$.
\end{theorem}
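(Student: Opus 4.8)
The plan is to reduce the isoperimetric problem to the fundamental problem of Section~\ref{sec:fp} by means of the Lagrange multiplier rule. Let $\mathbf{y}$ be a solution and consider a two-parameter family of admissible variations $\hat{\mathbf{y}} = \mathbf{y} + \epsilon_1 \mathbf{h}_1 + \epsilon_2 \mathbf{h}_2$, where $\mathbf{h}_1, \mathbf{h}_2 \in C^1([a,b];\mathbb{R}^N)$ satisfy $\mathbf{h}_1(a) = \mathbf{h}_1(b) = \mathbf{h}_2(a) = \mathbf{h}_2(b) = \mathbf{0}$, so that, by linearity of the $K$-, $A$- and $B$-operators, $\hat{\mathbf{y}}$ is admissible and meets \eqref{eq:IsoBound} for all $(\epsilon_1,\epsilon_2)$ in a neighbourhood of the origin. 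I would then introduce the two real-valued functions of two real variables
\begin{equation*}
i(\epsilon_1,\epsilon_2) := \mathcal{I}(\hat{\mathbf{y}}) - \xi, \qquad j(\epsilon_1,\epsilon_2) := \mathcal{J}(\hat{\mathbf{y}}),
\end{equation*}
noting that $i(0,0) = 0$ because $\mathbf{y}$ satisfies the constraint \eqref{eq:IsoConstr}.

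First I would show that the multiplier can be introduced. Differentiating under the generalized integral sign (justified by (H1)--(H2)) and using linearity together with the integration by parts formulas of Theorems~\ref{thm:gfip:Kop}, \ref{thm:gfip}, \ref{thm:IPL1} and/or \ref{thm:gfip2} (available by (H4)), the partial derivative $\left.\frac{\partial i}{\partial \epsilon_2}\right|_{(0,0)}$ can be written, after transferring all operators off $\mathbf{h}_2$, as $\int_a^b \mathbf{h}_2(t) \cdot \mathbf{E}_G(t)\, dt$, where $\mathbf{E}_G$ is precisely the left-hand side of the Euler--Lagrange system \eqref{eq:eqELCaputo} associated with $G$; the boundary contributions drop out because $\mathbf{h}_2$ vanishes at $a$ and $b$. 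Since by hypothesis $\mathbf{y}$ is not an extremal for $\mathcal{I}$, we have $\mathbf{E}_G \not\equiv 0$, so I may fix $\mathbf{h}_2$ for which $\left.\frac{\partial i}{\partial \epsilon_2}\right|_{(0,0)} \neq 0$. The implicit function theorem then yields a $C^1$ map $\epsilon_1 \mapsto \epsilon_2(\epsilon_1)$, defined near $0$ with $\epsilon_2(0) = 0$, such that $i(\epsilon_1, \epsilon_2(\epsilon_1)) \equiv 0$; that is, the constraint \eqref{eq:IsoConstr} is preserved along this curve.

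Next, since $\mathbf{y}$ extremizes $\mathcal{J}$ among admissible functions, $\epsilon_1 = 0$ is an extremum of the $C^1$ map $\epsilon_1 \mapsto j(\epsilon_1, \epsilon_2(\epsilon_1))$. Differentiating this composition and eliminating $\epsilon_2'(0)$ through the identity $i(\epsilon_1,\epsilon_2(\epsilon_1))\equiv 0$, I may choose the real constant $\lambda$ so that $\left.\frac{\partial}{\partial \epsilon_1}(j - \lambda i)\right|_{(0,0)} = 0$, which is possible precisely because $\left.\frac{\partial i}{\partial \epsilon_2}\right|_{(0,0)} \neq 0$. Setting $H = F - \lambda G$ and repeating the variation computation — differentiate under the integral, then integrate by parts exactly as in the proof of Theorem~\ref{theorem:ELCaputo} — this equation becomes $\int_a^b \mathbf{h}_1(t) \cdot \mathbf{E}_H(t)\, dt = 0$, where $\mathbf{E}_H$ is the left-hand side of the claimed system (the Euler--Lagrange expression for $H$) and the boundary terms again vanish since $\mathbf{h}_1(a) = \mathbf{h}_1(b) = \mathbf{0}$. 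As $\mathbf{h}_1$ is otherwise arbitrary, the fundamental lemma of the calculus of variations, applied componentwise for $j = 2,\dots,N+1$, forces $\mathbf{E}_H \equiv 0$ on $(a,b)$, which is exactly the asserted system.

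The main obstacle I anticipate is the Lagrange multiplier step rather than the algebra of the first variation: one must verify the regularity hypotheses needed to differentiate under the generalized fractional integral and to apply each integration by parts formula (this is the role of (H1)--(H4)), and, crucially, one must ensure that the non-extremality hypothesis on $\mathbf{y}$ genuinely produces a direction $\mathbf{h}_2$ with nonvanishing variation so that the implicit function theorem applies. Once $\lambda$ is in hand, the remaining computation is identical to that of Theorem~\ref{theorem:ELCaputo} with $F$ replaced by $H$, so no new integration by parts machinery is required.
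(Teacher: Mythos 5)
Your proposal is correct and follows essentially the same route as the paper, which proves this result by reference to Theorem~6.3 of \cite{FVC_Gen_Int}: a two-parameter family of variations, the implicit function theorem applied via the non-extremality of $\mathcal{I}$ to preserve the constraint, the Lagrange multiplier rule giving $\lambda$ independent of $\mathbf{h}_1$, and the fundamental lemma applied to the Euler--Lagrange expression of $H=F-\lambda G$. No gaps of substance; this is the argument the paper invokes.
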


\begin{proof}
The proof is analogous to that of \cite[Theorem~6.3]{FVC_Gen_Int}.
\end{proof}


\section{Generalized fractional Noether's theorem}
\label{sec:NT}

Emmy Noether's theorem on extremal functionals,
establishing that certain symmetries imply
conservation laws (constants of motion),
has been called ``the most important theorem
in physics since the Pythagorean theorem''. For a recent account
of Noether's theorem and possible applications in physics, from many different
points of view, we refer the reader to \cite{1239.00018}.
Formulations in the more general context of optimal control can be found
in \cite{GouveiaTorresRochaPolonia05,del:EJC}.
Conservation laws appear naturally in closed systems.
In presence of non-conservative or dissipative forces,
the constants of motion are broken and
Noether's classical theorem ceases to be valid.
It is still possible, however, to obtain Noether type
theorems that cover both conservative and
non-conservative cases. Roughly speaking,
one can prove that Noether's conservation laws are
still valid if a new term, involving the non-conservative forces, is
added to the standard constants of motion \cite{MyID:062}.
The first Noether theorem for the fractional calculus of variations
was obtained in 2007 \cite{MyID:068}. Since then,
the subject attracted a lot of attention. The state of the art
is given in the book \cite{book:frac}. Here we obtain a
Noether's theorem for generalized fractional variational problems.

\begin{definition}
We say that the functional \eqref{eq:1} is invariant under an
$\varepsilon$-parameter group of infinitesimal transformations
\begin{equation}
\label{trans:1}
\hat{\mathbf{y}}(t)=\mathbf{y}(t)+\varepsilon {\bm \xi}(t,\mathbf{y}(t))+o(\varepsilon)
\end{equation}
if for any subinterval $[t_a,t_b]\subseteq [a,b]$ one has
\begin{equation}
\label{inv}
K_{\bar{P}}^\alpha \left[t \mapsto F\left\{\mathbf{y}\right\}_{P_D,R_I}^{\beta,\gamma}(t)\right](t_b)
=K_{\bar{P}}^\alpha\left[t \mapsto F\left\{\hat{\mathbf{y}}\right\}_{P_D,R_I}^{\beta,\gamma}(t)\right](t_b),
\end{equation}
where $\bar{P}=\langle t_a,t_b,t_b,1,0 \rangle$.
\end{definition}

\begin{theorem}
If functional \eqref{eq:1} is invariant under an $\varepsilon$-parameter
group of infinitesimal transformations, then
\begin{multline}
\label{NCI}
\sum\limits_{j=2}^{N+1}\Biggl(\partial_j
F\left\{\mathbf{y}\right\}_{P_D,R_I}^{\beta,\gamma}(t)\cdot\xi_{j-1}(t,\mathbf{y}(t))
+\partial_{N+j} F\left\{\mathbf{y}\right\}_{P_D,R_I}^{\beta,\gamma}(t)
\cdot\frac{d}{dt}\xi_{j-1}(t,\mathbf{y}(t))\\
+\sum\limits_{i=1}^n\partial_{(i+1)N+j}
F\left\{\mathbf{y}\right\}_{P_D,R_I}^{\beta,\gamma}(t)
\cdot B_{P_i}^{\beta_i}[\tau \mapsto \xi_{j-1}(\tau,\mathbf{y(\tau)})](t)\\
+\sum\limits_{k=1}^m\partial_{(n+1+k)N+j}
F\left\{\mathbf{y}\right\}_{P_D,R_I}^{\beta,\gamma}(t)
\cdot K_{R_i}^{\gamma_i}[\tau \mapsto \xi_{j-1}(\tau,\mathbf{y}(\tau))](t)\Biggr) = 0.
\end{multline}
\end{theorem}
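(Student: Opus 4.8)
The plan is to reduce the invariance identity \eqref{inv} to the pointwise relation \eqref{NCI} by differentiating with respect to the group parameter $\varepsilon$ at $\varepsilon=0$ and then exploiting the arbitrariness of the subinterval $[t_a,t_b]$. First I would make the operator $K_{\bar P}^\alpha$ explicit: since $\bar P=\langle t_a,t_b,t_b,1,0\rangle$, the definition of the generalized fractional integral gives
\[
K_{\bar P}^\alpha[t\mapsto f(t)](t_b)=\int_{t_a}^{t_b}k_\alpha(t_b,t)\,f(t)\,dt ,
\]
so that \eqref{inv} becomes $\int_{t_a}^{t_b}k_\alpha(t_b,t)\bigl(F\{\hat{\mathbf y}\}_{P_D,R_I}^{\beta,\gamma}(t)-F\{\mathbf y\}_{P_D,R_I}^{\beta,\gamma}(t)\bigr)\,dt=0$ for every $[t_a,t_b]\subseteq[a,b]$.

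Next I would differentiate this identity in $\varepsilon$ and set $\varepsilon=0$, carrying the derivative under the integral sign (legitimate by (H1) together with the continuity and boundedness of the operators involved, cf.\ Theorem~\ref{theorem:L1}). From \eqref{trans:1} one has $\frac{d}{d\varepsilon}\big|_{0}\hat y_{j-1}=\xi_{j-1}(t,\mathbf y(t))$ and $\frac{d}{d\varepsilon}\big|_{0}\hat y_{j-1}'=\frac{d}{dt}\xi_{j-1}(t,\mathbf y(t))$; since $B_{P_i}^{\beta_i}$ and $K_{R_k}^{\gamma_k}$ are linear and bounded, their derivatives in $\varepsilon$ at $0$ are $B_{P_i}^{\beta_i}[\xi_{j-1}]$ and $K_{R_k}^{\gamma_k}[\xi_{j-1}]$, the $o(\varepsilon)$ term of \eqref{trans:1} contributing nothing. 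Applying the chain rule to $F$ then shows that $\frac{d}{d\varepsilon}\big|_{0}F\{\hat{\mathbf y}\}_{P_D,R_I}^{\beta,\gamma}(t)$ is exactly the expression $\Phi(t)$ standing inside the outer sum of \eqref{NCI}, whence $\int_{t_a}^{t_b}k_\alpha(t_b,t)\Phi(t)\,dt=0$ for all $[t_a,t_b]\subseteq[a,b]$.

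It remains to strip off the kernel. Fixing $t_b$ and differentiating the integral identity with respect to the lower limit $t_a$ (the integrand being continuous in $t$) yields $k_\alpha(t_b,t_a)\Phi(t_a)=0$ for every $t_a<t_b$. Choosing, for each point of $(a,b)$, an upper limit at which the kernel does not vanish --- a mild nondegeneracy shared by all the standard kernels of the Example (Riemann--Liouville, Hadamard, and so on) --- gives $\Phi\equiv 0$ on $(a,b)$, which is precisely \eqref{NCI}.

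The step I expect to be delicate is this last one: passing from a weighted integral identity over arbitrary subintervals to the pointwise vanishing of $\Phi$. Because the weight $k_\alpha(t_b,\cdot)$ intervenes, the classical fundamental lemma does not apply verbatim, and the conclusion really does require a nonvanishing (or one-signed) assumption on $k_\alpha$; in its absence one could only deduce that $\Phi$ vanishes on the support of the kernel. By comparison, the differentiation-in-$\varepsilon$ step is routine once (H1)--(H4) secure the continuity of the integrand and justify interchanging $d/d\varepsilon$ with the integral.
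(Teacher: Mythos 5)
Your proof is correct and uses the same two ingredients as the paper's, but in the opposite order. The paper first localizes: since \eqref{inv} holds on every subinterval $[t_a,t_b]$, it concludes the pointwise identity of the integrands, $F\left\{\mathbf{y}\right\}_{P_D,R_I}^{\beta,\gamma}(t)=F\left\{\hat{\mathbf{y}}\right\}_{P_D,R_I}^{\beta,\gamma}(t)$, and only then differentiates this pointwise identity in $\varepsilon$ at $\varepsilon=0$ and applies the chain rule. You instead differentiate \eqref{inv} in $\varepsilon$ under the integral sign first, and then localize by differentiating with respect to the endpoint $t_a$ and invoking nonvanishing of the kernel. The two routes are equivalent, and each buys something: the paper's order avoids having to justify the interchange of $d/d\varepsilon$ with the integral (one differentiates a pointwise identity, not an integral), whereas your order makes fully explicit what the paper's passage from \eqref{inv} to its pointwise consequence silently requires --- namely exactly the fundamental-theorem-of-calculus argument plus the nondegeneracy condition $k_\alpha(t_b,t_a)\neq 0$ that you isolate. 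That caveat is not an artifact of your ordering: the paper's own localization step needs the same hypothesis (for a kernel vanishing on a set of positive measure, equality of the weighted integrals over all subintervals would not force equality of the integrands), so your remark identifies a genuine implicit assumption of the paper rather than a defect of your argument.
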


\begin{proof}
Since, by hypothesis, condition \eqref{inv}
is satisfied for any subinterval $[t_a,t_b]\subseteq [a,b]$, we have
\begin{equation}
\label{eq:5}
F\left\{\mathbf{y}\right\}_{P_D,R_I}^{\beta,\gamma}(t)
=F\left\{\hat{\mathbf{y}}\right\}_{P_D,R_I}^{\beta,\gamma}(t).
\end{equation}
Differentiating \eqref{eq:5} with respect to $\varepsilon$,
then putting $\varepsilon=0$, and applying definitions
and properties of generalized fractional operators, we obtain \eqref{NCI}.
\end{proof}

In order to state the Noether theorem in a compact form,
we introduce the following operators:
\begin{equation}
\label{op1}
\mathbf{D}_P^{\alpha}[f,g](t)
:= \frac{1}{k_{\alpha}(b,t)}f(t)\cdot A_{P^*}^{\alpha}[g](t)
+g(t)\cdot B_{P}^{\alpha}[f](t),
\end{equation}
\begin{equation}
\label{op2}
\mathbf{I}_P^{\alpha}[f,g](t)
:= \frac{-1}{k_{\alpha}(b,t)}f(t)
\cdot K_{P^*}^{\alpha}[g](t)+g(t)\cdot K_{P}^{\alpha}[f](t),
\end{equation}
where $P^*$ denotes the dual $p$-set of $P$, that is,
if $P=\langle a,t,b,p,q\rangle$, then  $P^*=\langle a,t,b,q,p\rangle$.

\begin{theorem}[Generalized fractional Noether's theorem]
\label{thm:gfnt}
If functional \eqref{eq:1} is invariant under an $\varepsilon$-parameter
group of infinitesimal transformations \eqref{trans:1}, then
\begin{multline}
\label{eq:NTH}
\sum\limits_{j=2}^{N+1}\Biggl(\sum\limits_{i=1}^n
\mathbf{D}_{P_i}^{\beta_i}\left[\tau \mapsto
\xi_{j-1}(\tau,\mathbf{y}(\tau)), \tau \mapsto
k_{\alpha}(b,\tau)\partial_{(i+1)N+j}
F\left\{\mathbf{y}\right\}_{P_D,R_I}^{\beta,\gamma}(\tau)\right](t)\\
+\sum\limits_{k=1}^m \mathbf{I}_{R_k}^{\gamma_k}\left[\tau \mapsto
\xi_{j-1}(\tau,\mathbf{y}(\tau)), \tau \mapsto
k_{\alpha}(b,\tau)\partial_{(n+1+k)N+j}
F\left\{\mathbf{y}\right\}_{P_D,R_I}^{\beta,\gamma}(\tau)\right](t)\\
+\frac{d}{dt}\left(\xi_{j-1}\left(t,\mathbf{y}(t)\right)\cdot\partial_{N+j}
F\left\{\mathbf{y}\right\}_{P_D,R_I}^{\beta,\gamma}(t)\right)
+\xi_{j-1}(t,\mathbf{y}(t))\cdot\partial_{N+j}
F\left\{\mathbf{y}\right\}_{P_D,R_I}^{\beta,\gamma}(t)
\cdot\frac{1}{k_{\alpha}(b,t)}\frac{d}{dt}k_{\alpha}(b,t)\Biggr) = 0
\end{multline}
for any generalized fractional extremal $\mathbf{y}$ of $\mathcal{J}$
and for all $t \in (a,b)$.
\end{theorem}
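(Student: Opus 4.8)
The plan is to read \eqref{eq:NTH} as the combination of two facts that both hold for a generalized fractional extremal $\mathbf{y}$ of an invariant functional: the pointwise invariance identity \eqref{NCI} established in the previous theorem, and the Euler--Lagrange system \eqref{eq:eqELCaputo}, which $\mathbf{y}$ satisfies by the definition of extremal. In \eqref{NCI} the term obstructing a compact formulation is the one carrying $\partial_j F$ (I suppress the argument $\left\{\mathbf{y}\right\}_{P_D,R_I}^{\beta,\gamma}$ throughout). The Euler--Lagrange equations express precisely $k_\alpha(b,t)\,\partial_j F$ through the operators $A_{P_i^*}^{\beta_i}$, $K_{R_k^*}^{\gamma_k}$ and a total derivative, so substituting that expression into \eqref{NCI} is exactly what converts the invariance identity into the conservation law, with the operators $\mathbf{D}$ and $\mathbf{I}$ of \eqref{op1}--\eqref{op2} emerging from the regrouping.

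Concretely, I would first solve \eqref{eq:eqELCaputo} for the $\partial_j F$ term, writing
\[
k_\alpha(b,t)\,\partial_j F = \sum_{i=1}^n A_{P_i^*}^{\beta_i}\!\left[\tau\mapsto k_\alpha(b,\tau)\partial_{(i+1)N+j}F\right](t) - \sum_{k=1}^m K_{R_k^*}^{\gamma_k}\!\left[\tau\mapsto k_\alpha(b,\tau)\partial_{(n+1+k)N+j}F\right](t) + \frac{d}{dt}\!\left(k_\alpha(b,t)\,\partial_{N+j}F\right).
\]
Dividing by $k_\alpha(b,t)$, multiplying by $\xi_{j-1}(t,\mathbf{y}(t))$, and inserting the result in place of $\partial_j F\cdot\xi_{j-1}$ in \eqref{NCI}, I would then group the terms so as to recognise the defining combinations \eqref{op1} and \eqref{op2}: the contribution $\tfrac{1}{k_\alpha(b,t)}\xi_{j-1}A_{P_i^*}^{\beta_i}[\,\cdot\,]$ coming from the Euler--Lagrange substitution joins the contribution $\partial_{(i+1)N+j}F\cdot B_{P_i}^{\beta_i}[\xi_{j-1}]$ already present in \eqref{NCI} to assemble $\mathbf{D}_{P_i}^{\beta_i}$, and likewise the $K_{R_k^*}^{\gamma_k}$ term joins $\partial_{(n+1+k)N+j}F\cdot K_{R_k}^{\gamma_k}[\xi_{j-1}]$ to assemble $\mathbf{I}_{R_k}^{\gamma_k}$.

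What remains are the two derivative pieces $\tfrac{1}{k_\alpha(b,t)}\xi_{j-1}\frac{d}{dt}\!\left(k_\alpha(b,t)\partial_{N+j}F\right)$ and $\partial_{N+j}F\cdot\frac{d}{dt}\xi_{j-1}$. The one elementary but essential identity is the weighted product rule
\[
\frac{1}{k_\alpha(b,t)}\frac{d}{dt}\!\left(k_\alpha(b,t)\,\xi_{j-1}\,\partial_{N+j}F\right) = \frac{d}{dt}\!\left(\xi_{j-1}\,\partial_{N+j}F\right) + \xi_{j-1}\,\partial_{N+j}F\cdot\frac{1}{k_\alpha(b,t)}\frac{d}{dt}k_\alpha(b,t),
\]
which shows that these two pieces reassemble exactly into the last two summands of \eqref{eq:NTH}. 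Summing over $j$ and over $i,k$, and invoking \eqref{NCI}, which equals zero by invariance, then yields \eqref{eq:NTH}.

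I expect the main obstacle to be organisational rather than conceptual: one must bookkeep the weight $k_\alpha(b,t)$ consistently across the many indexed terms, so that the $A_{P_i^*}^{\beta_i}$- and $K_{R_k^*}^{\gamma_k}$-contributions delivered by the Euler--Lagrange equation carry precisely the powers of $k_\alpha(b,t)$ needed to close up the definitions \eqref{op1}--\eqref{op2}. The reduction works because the same weighted derivatives $k_\alpha(b,\tau)\partial_{(i+1)N+j}F$ and $k_\alpha(b,\tau)\partial_{(n+1+k)N+j}F$ serve simultaneously as the second arguments of $\mathbf{D}$ and $\mathbf{I}$ and as the integrands acted on in the Euler--Lagrange system; once this matching is arranged, every term produced by the substitution lands in exactly one summand of \eqref{eq:NTH}. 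Finally, since this is the pointwise counterpart of differentiating the invariance identity \eqref{inv} in $\varepsilon$ at $\varepsilon=0$ and then imposing extremality, the whole computation parallels the scalar case treated in \cite{FVC_Gen_Int}.
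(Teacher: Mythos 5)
Your proposal is correct and follows essentially the same route as the paper's own proof: the paper likewise solves the Euler--Lagrange system \eqref{eq:eqELCaputo} for $k_\alpha(b,t)\,\partial_j F$, substitutes the result into the invariance identity \eqref{NCI} (referred to in the paper, via a slight misreference, as \eqref{inv}), and then regroups the terms using the definitions \eqref{op1} and \eqref{op2}. The only difference is one of exposition: you spell out the weighted product rule that recombines the two derivative pieces into the last two summands of \eqref{eq:NTH}, a step the paper leaves implicit in the phrase ``we arrive to \eqref{eq:NTH} by \eqref{op1} and \eqref{op2}.''
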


\begin{proof}
By Theorem~\ref{theorem:ELCaputo} we have
\begin{multline}
\label{eq:6}
k_\alpha(b,t)\partial_j F \left\{\mathbf{y}\right\}_{P_D, R_I}^{\beta,\gamma}(t)
=\sum_{i=1}^n A_{P_i^*}^{\beta_i}\left[\tau \mapsto k_\alpha(b,\tau)\partial_{(i+1)N+j}
F\left\{\mathbf{y}\right\}_{P_D, R_I}^{\beta,\gamma}(\tau)\right](t)\\
-\sum_{k=1}^m K_{R_i^*}^{\gamma_k}\left[\tau \mapsto k_\alpha(b,\tau)\partial_{(n+1+k)N+j}
F\left\{\mathbf{y}\right\}_{P_D, R_I}^{\beta,\gamma}(\tau)\right](t)
+\frac{d}{dt}\left(\partial_{N+j}F\left\{\mathbf{y}\right\}_{P_D, R_I}^{\beta,\gamma}(t)
k_\alpha(b,t)\right)
\end{multline}
for all $t\in(a,b)$, $j=2,\dots,N+1$.
Substituting \eqref{eq:6} into \eqref{inv}, we obtain
\begin{multline*}
\sum\limits_{j=2}^{N+1}\Biggl[\frac{1}{k_{\alpha}(b,t)}
\cdot\xi_{j-1} (t,\mathbf{y}(t))\Biggl(\sum_{i=1}^n A_{P_i^*}^{\beta_i}\left[
\tau \mapsto k_\alpha(b,\tau)\partial_{(i+1)N+j}
F\left\{\mathbf{y}\right\}_{P_D, R_I}^{\beta,\gamma}(\tau)\right](t)\\
-\sum_{k=1}^m K_{R_i^*}^{\gamma_k}\left[\tau \mapsto k_\alpha(b,\tau)\partial_{(n+1+k)N+j}
F\left\{\mathbf{y}\right\}_{P_D, R_I}^{\beta,\gamma}(\tau)\right](t)
+\frac{d}{dt}\left(\partial_{N+j}F\left\{\mathbf{y}\right\}_{P_D, R_I}^{\beta,\gamma}(t)
k_\alpha(b,t)\right)\Biggr)\\
+\partial_{N+j} F\left\{\mathbf{y}\right\}_{P_D,R_I}^{\beta,\gamma}(t)
\cdot\frac{d}{dt}\xi_{j-1}\left(t,\mathbf{y}(t)\right)
+\sum\limits_{i=1}^n\partial_{(i+1)N+j} F\left\{\mathbf{y}\right\}_{P_D,R_I}^{\beta,\gamma}(t)
\cdot B_{P_i}^{\beta_i}[\tau \mapsto \xi_{j-1}\left(\tau,\mathbf{y}(\tau)\right)](t)\\
+\sum\limits_{k=1}^m\partial_{(n+1+k)N+j}
F\left\{\mathbf{y}\right\}_{P_D,R_I}^{\beta,\gamma}(t)
\cdot K_{R_i}^{\gamma_i}[\tau \mapsto \xi_{j-1}(\tau,\mathbf{y}(\tau))](t)\Biggr] = 0.
\end{multline*}
Finally, we arrive to \eqref{eq:NTH} by \eqref{op1} and \eqref{op2}.
\end{proof}

\begin{example}
Let $P=\langle a,t,b,p,q\rangle$.
Consider the following problem:
\begin{equation}
\label{eq:example:2}
\begin{gathered}
\mathcal{J}[y]=\int_a^b F\left(t,B_{P}^\alpha[y](t)\right) dt \longrightarrow \min\\
y(a)=y_a \, , \quad y(b)=y_b,
\end{gathered}
\end{equation}
and transformations
\begin{equation}
\label{eq:Tr:2}
\hat{y}(t)=y(t)+\varepsilon c+o(\varepsilon),
\end{equation}
where $c$ is a constant. For any $[t_a,t_b]\subseteq [a,b]$ we have
\begin{equation*}
\int_{t_a}^{t_b}F\left(t,B_{P}^\alpha[y](t)\right) dt
=\int_{t_a}^{t_b}F\left(t,B_{P}^\alpha[\hat{y}](t)\right) dt.
\end{equation*}
Therefore, $\mathcal{J}[y]$ is invariant under \eqref{eq:Tr:2}
and Theorem~\ref{thm:gfnt} asserts that
\begin{equation}
\label{eq:ex:NTH}
A_{P^*}^\alpha[\tau \rightarrow \partial_2
F\left(\tau,B_{P}^\alpha[y](\tau)\right)](t)=0
\end{equation}
along any generalized fractional extremal $y$.
Notice that equation \eqref{eq:ex:NTH}
can be written in the form
\begin{equation*}
\frac{d}{dt}\left(K_{P^*}^\alpha[\tau \rightarrow
\partial_2 F\left(\tau,B_{P}^\alpha[y](\tau)\right)](t)\right)=0.
\end{equation*}
In analogy with the classical approach, quantity
$K_{P^*}^\alpha[\tau \rightarrow \partial_2 F\left(\tau,B_{P}^\alpha[y](\tau)\right)](t)$
is called a \emph{generalized fractional constant of motion}.
\end{example}


\section{Applications to Physics}
\label{sec:appl:phys}

If the functional \eqref{eq:1} does not depend on $B$-ops and $K$-ops,
then Theorem~\ref{theorem:ELCaputo} gives the following result:
if $\mathbf{y}$ is a solution to the problem of extremizing
\begin{equation}
\label{f:ex}
\mathcal{J}(\mathbf{y})=\int_a^b
L\left(t,\mathbf{y}(t),\mathbf{y}'(t)\right) k_\alpha(b,t) dt
\end{equation}
subject to $\mathbf{y}(a)=\mathbf{y}_a$ and $\mathbf{y}(b)=\mathbf{y}_b$,
where $\alpha\in(0,1)$, then
\begin{equation}
\label{eq:FALVA}
\partial_j L\left(t,\mathbf{y}(t),\mathbf{y}'(t)\right)-\frac{d}{dt}\partial_{N+j}
L\left(t,\mathbf{y}(t),\mathbf{y}'(t)\right)=\frac{1}{k_{\alpha}(b,t)}
\cdot \frac{d}{dt}k_{\alpha}(b,t)\partial_{N+j} L\left(t,\mathbf{y}(t),\mathbf{y}'(t)\right),
\end{equation}
$j=2,\dots,N+1$. In addition, if we assume that functional \eqref{f:ex}
is invariant under transformations \eqref{trans:1}, then Noether's theorem yields that
\begin{equation*}
\sum\limits_{j=2}^{N+1}\biggl(\frac{d}{dt}\left(\xi_{j-1}(t,\mathbf{y}(t))\cdot\partial_{N+j}
L(t,\mathbf{y}(t),\mathbf{y}'(t))\right)
+\xi_{j-1}(t,\mathbf{y}(t))\cdot\partial_{N+j}
L(t,\mathbf{y}(t),\mathbf{y}'(t))
\cdot\frac{1}{k_{\alpha}(b,t)}\frac{d}{dt}k_{\alpha}(b,t)\biggr)=0,
\end{equation*}
along any extremal of \eqref{f:ex}.
Let us consider kernel $k_\alpha(b,t)=\mathrm{e}^{\alpha (b-t)}$
and the Lagrangian for a three dimensional system:
\begin{equation*}
L\left(\mathbf{y},\dot{\mathbf{y}}\right)
=\frac{1}{2}m \left(\dot{y_1}^2+\dot{y_2}^2+\dot{y_3}^2\right)-V(\mathbf{y}),
\end{equation*}
where $V(\mathbf{y})$ is the potential energy and $m$ stands for the mass.
Observe that an explicitly time dependent integrand $\tilde{L}=\mathrm{e}^{\alpha (b-t)}L$
of functional \eqref{f:ex} is known in the literature as the Bateman--Caldirola--Kanai (BCK)
Lagrangian of a quantum dissipative system \cite{Menon,MR2476015}. But in our case the Lagrangian
of the system is $L$ and not $\mathrm{e}^{\alpha (b-t)}L$.
The Euler--Lagrange equations \eqref{eq:FALVA} give the following system of
second order ordinary differential equations:
\begin{equation*}
\begin{cases}
\ddot{y_1}(t)-\alpha \dot{y_1}(t) = -\frac{1}{m}\partial_1 V(\mathbf{y}(t))\\
\ddot{y_2}(t)-\alpha \dot{y_2}(t) = -\frac{1}{m}\partial_2 V(\mathbf{y}(t))\\
\ddot{y_3}(t)-\alpha \dot{y_3}(t) = -\frac{1}{m}\partial_3 V(\mathbf{y}(t)).
\end{cases}
\end{equation*}
If $\gamma := -\alpha$, then
\begin{equation}
\label{eq:damped}
\ddot{y}_i+\gamma\dot{y}_i+\frac{1}{m}\frac{\partial V}{\partial y_i}=0,
\end{equation}
$i=1,2,3$, which are equations for the damped motion of a three-dimensional particle
under the action of a force $\left[-\frac{\partial V}{\partial y_1},
-\frac{\partial V}{\partial y_2},-\frac{\partial V}{\partial y_3}\right]$
(see, e.g., \cite{Herrera}). Choosing $V := k\frac{y_1^2+y_2^2+y_3^2}{2}$,
we can transform \eqref{eq:damped} into equations
for a damped simple harmonic oscillator:
\begin{equation*}
\ddot{y_i}(t)+\gamma\dot{y_i}(t)+\omega^2 y_i(t)=0,
\end{equation*}
$i=1,2,3$, with $\omega^2=\frac{k}{m}$. Now, let us consider the following Lagrangian:
\begin{equation}
\label{lag:inv}
L\left(\mathbf{y},\dot{\mathbf{y}}\right)
=\frac{1}{2}m \left(\dot{y_1}^2+\dot{y_2}^2+\dot{y_3}^2\right)-mgy_3^2.
\end{equation}
We see at once that the Lagrangian \eqref{lag:inv} is invariant under the transformation
\begin{equation*}
\hat{y}_1=y_1+\varepsilon,~\hat{y}_2=y_2,~\hat{y}_3=y_3.
\end{equation*}
In this case Noether's theorem gives
\begin{equation}
\label{eq:cm}
\frac{d}{dt}(m\dot{y}_1 )= \alpha m\dot{y}_1.
\end{equation}
If $\alpha=0$, then there is no friction and \eqref{eq:cm} yields the classical conservation
of linear momentum $p_1=m\dot{y}_1=const$. Observe that the generalized momentum conjugate
to $y_i$ is $p_i=\frac{\partial L}{\partial \dot{y}_i}=m\dot{y}_i$, $i=1,2,3$. This is not
the case for the the BCK Lagrangian \cite{MR2476015}, where the canonical momentum for $y_i$
is $\tilde{p}_i=\mathrm{e}^{\alpha (b-t)}m\dot{y}_i$, $i=1,2,3$,
that is different from the kinetic momentum. Now, let us suppose that $L$
is variationally invariant under the transformation
\begin{equation*}
\hat{y}_1=y_1\cos\varepsilon+y_2\sin\varepsilon, \
\hat{y}_2 =-y_1\sin\varepsilon+y_2\cos\varepsilon, \
\hat{y}_3=y_3.
\end{equation*}
Then $\xi_1=y_2$, $\xi_2=-y_1$ and $\xi_3=0$.
For this case Noether's theorem yields
\begin{equation}
\label{eq:fric:CL:mom}
\frac{d}{dt}(m\dot{y}_1y_2-m y_1\dot{y}_2)-\alpha m (\dot{y_1} y_2-y_1\dot{y}_2)=0.
\end{equation}
Note that for $\alpha = 0$ relation \eqref{eq:fric:CL:mom} gives the standard
conservation law $p_1 y_2 - p_2 y_1 = const$ yielded by the classical
Noether's theorem \cite[Section~9.3]{vanBrunt}.


\section*{Acknowledgements}

This work was supported by {\it FEDER} funds through
{\it COMPETE} --- Operational Programme Factors of Competitiveness
(``Programa Operacional Factores de Competitividade'')
and by Portuguese funds through the
{\it Center for Research and Development
in Mathematics and Applications} (University of Aveiro)
and the Portuguese Foundation for Science and Technology
(``FCT --- Funda\c{c}\~{a}o para a Ci\^{e}ncia e a Tecnologia''),
within project PEst-C/MAT/UI4106/2011
with COMPETE number FCOMP-01-0124-FEDER-022690.
Odzijewicz was also supported by FCT through the Ph.D. fellowship
SFRH/BD/33865/2009; Malinowska by Bialystok
University of Technology grant S/WI/02/2011;
and Torres by FCT through the project PTDC/MAT/113470/2009.

The authors are very grateful to two anonymous referees
for their valuable comments and helpful suggestions.


\small



\end{document}